\newtheorem{remark}[theorem]{Remark}
\newtheorem{assumption}[theorem]{Assumption}
\newcommand{\norm}[1]{\left\| #1 \right\|}
\newcommand{\R}{\mathbb{R}}
\newcommand{\ii}{\mathrm{i}}
\newcommand{\dd}{\mathrm{d}}
\newcommand{\tu}{\tilde{u}}
\title{Inverse boundary value problem for the Helmholtz equation:
       quantitative conditional Lipschitz stability estimates
       \thanks{This research was supported in part by the members, BGP, ExxonMobil,
PGS, Statoil and Total, of the Geo-Mathematical Imaging Group now at
Rice University.}}
\author{
Elena Beretta\thanks{Dipartimento di Matematica ``Brioschi'',
  Politecnico di Milano, Italy
  (\href{mailto:elena.beretta@polimi.it}{\texttt{elena.beretta@polimi.it}}).}
\and
Maarten V. de Hoop\thanks{Department of Computational and Applied Mathematics 
  and Department of Earth Science, Rice University, 6100 Main Street, 
  Houston TX 77005, USA (\href{mailto:mdehoop@rice.edu}{\texttt{mdehoop@rice.edu}}).
  The research of this author is supported in part by the Simons Foundation.}
\and
Florian Faucher\thanks{INRIA Bordeaux Sud-Ouest Research Center, Team Project Magique-3D, France (\href{mailto:florian.faucher@inria.fr}{\texttt{florian.faucher@inria.fr}}).}
\and
Otmar Scherzer\thanks{Computational Science Center, University of
  Vienna, Oskar-Morgenstern Platz 1, A-1090 Vienna, Austria 
  (\href{mailto:otmar.scherzer@univie.ac.at}{\texttt{otmar.scherzer@univie.ac.at}}).
  The research of this author is supported by the Austrian Science Fund 
  (FWF), Project P26687-N25 Interdisciplinary Coupled Physics Imaging.}
}
\begin{document}

\maketitle
\slugger{sima}{xxxx}{xx}{x}{x--x}

\begin{abstract}
We study the inverse boundary value problem for the Helmholtz equation
using the Dirichlet-to-Neumann map at selected frequencies as the data. 
A conditional
Lipschitz stability estimate for the inverse problem holds in the 
case of wavespeeds that are a linear combination of piecewise constant
functions (following a domain partition) and gives a framework
in which the scheme converges. 
The stability constant grows exponentially 
as the number of subdomains in the domain partition
increases. 
We establish an order optimal upper bound for the stability constant. 
We eventually realize computational experiments to demonstrate the 
stability constant evolution for three dimensional 
wavespeed reconstruction.
\end{abstract}

\begin{keywords} Inverse problems, Helmholtz equation,
 stability and convergence of numerical methods.\end{keywords}

\begin{AMS} 35R30, 86A22, 65N12, 35J25 \end{AMS}

\pagestyle{myheadings}
\thispagestyle{plain}
\markboth{E. BERETTA, M. DE HOOP, F. FAUCHER, AND O. SCHERZER}
         {LIPSCHITZ STABILITY OF HELMHOLTZ INVERSE PROBLEM}

\section{Introduction}

In this paper we study the inverse boundary value problem for the
Helmholtz equation using the Dirichlet-to-Neumann map 
at selected frequencies as the
data. This inverse problem arises, for example, in reflection
seismology and inverse obstacle scattering problems for
electromagnetic waves \cite{Bao2005a, Symes2009, Bao2010}. We consider
wavespeeds containing discontinuities.

Uniqueness of the mentioned inverse boundary value problem was
established by Sylvester \& Uhlmann \cite{Sylvester1987} assuming that
the wavespeed is a bounded measurable function. This inverse problem
has been extensively studied from an optimization point of view. We
mention, in particular, the work of \cite{Hadj-Ali2008}.

It is well known that the logarithmic character of stability of the
inverse boundary value problem for the Helmholtz equation
\cite{Alessandrini1988, Novikov2011} cannot be avoided, see also 
\cite{Hahner2001,Hohage1997}. In fact, in
\cite{Mandache2001} Mandache proved that despite of regularity 
a priori assumptions of any order on the unknown wavespeed,
logarithmic stability is the best possible. However, conditional Lipschitz
stability estimates can be obtained: accounting for discontinuities,
such an estimate holds if the unknown wavespeed is a finite linear
combination of piecewise constant functions with an underlying known
domain partitioning \cite{Beretta2012}. It was
obtained following an approach introduced by Alessandrini and Vessella
\cite{Alessandrini2005} and further developed by Beretta and Francini
\cite{Beretta2011} for Electrical Impedance Tomography (EIT) 
based on the use of singular solutions. If, on one hand, this method allows 
to use partial data, on the other hand it does not allow to find an 
optimal bound of the stability constant.  Here, we
revisit the Lipschitz stability estimate for the full
Dirichlet-to-Neumann map using complex geometrical optics (CGO) solutions
which give rise to a sharp upper bound of the Lipschitz constant in
terms of the number of subdomains in the domain partitioning. We
develop the estimate in $L^2(\Omega)$.

Unfortunately, the use of CGO's solutions leads naturally to a 
dependence of the stability constant on frequency of exponential type.
This is clearly far from being optimal as it is also pointed out in the 
paper of Nagayasu, Uhlmann and Wang \cite{Nagayasu2013}. There the authors 
prove a stability estimate, in terms of Cauchy data instead of the 
Dirichlet-to-Neumann map using CGO solutions. They derive a stability 
estimate consisting  of two parts: a Lipschitz stability estimate and a 
Logarithmic stability estimate. When the frequency increases the logarithmic 
part decreases while the Lipschitz part becomes dominant but with a stability 
constant which blows up exponentially in frequency. 

We can exploit the quantitative stability estimate, via a Fourier
transform, in the corresponding time-domain inverse boundary value
problem with bounded frequency data. Datchev and De Hoop \cite{Datchev2015} 
showed how to choose classes of non-smooth coefficient functions, one of which
is consistent with the class considered here, so that optimization
formulations of inverse wave problems satisfy the prerequisites for
application of steepest descent and Newton-type iterative
reconstruction methods. The proof is based on resolvent estimates for
the Helmholtz equation. Thus, one can allow approximate localization
of the data in selected time windows, with size inversely proportional
to the maximum allowed frequency. This is of importance to
applications in the context of reducing the complexity of field
data. We note that no information is lost by cutting out a (short)
time window, since the boundary source functions (and wave solutions),
being compactly supported in frequency, are analytic with respect to
time. We cannot allow arbitrarily high frequencies in the data. This
restriction is reflected, also, in the observation by Blazek, Stolk \&
Symes \cite{Blazek2013} that the adjoint equation, which appears in 
the mentioned iterative methods, does not admit solutions.

As a part of the analysis, we study the Fr\'{e}chet differentiability of the
direct problem and obtain the frequency and domain partitioning
dependencies of the relevant constants away from the Dirichlet
spectrum. Our results hold for finite fixed frequency data including
frequencies arbitrarily close to zero while avoiding Dirichlet
eigenfrequencies; in view of the estimates, inherently, there is a
finest scale which can be reached.
Finally we estimate the stability numerically and demonstrate
the validity of the bounds, in particular in the context
of reflection seismology.

\section{Inverse boundary value problem with the Dirichlet-to-Neumann
         map as the data}  \label{sec:ibvp_dtn}
         
\subsection{Direct problem and forward operator}
We describe the direct problem and some properties of the data, that
is, the Dirichlet-to-Neumann map. We will formulate the direct problem
as a nonlinear operator mapping $F_{\omega}$ from $L^\infty(\Omega)$ to
$\mathcal{L}(H^{1/2}(\partial \Omega), H^{-1/2}(\partial \Omega))$
defined as
\[
   F_{\omega}(c^{-2})=\Lambda_{\omega^2 c^{-2}} ,
\]
where $\Lambda_{\omega^2 c^{-2}}$ indicates the Dirichlet to Neumann
operator. Indeed, at fixed frequency $\omega^2$, we consider the
boundary value problem,
\begin{equation} \label{Helmholtz}
\left\{\begin{array}{rl}
   (-\Delta - \omega^2 c^{-2}(x)) u = &  0 ,\quad  \mbox{ in }\Omega ,\\
   u = & g \quad \mbox{ on } \partial \Omega ,
   \end{array}
   \right.
\end{equation}
while $\Lambda_{\omega^2 c^{-2}} :\ g \to \frac{\partial u}{\partial\nu} |_{\partial\Omega}$, where $\nu$ denotes the outward unit normal vector to $\partial\Omega$. In
this section, we will state some known results concerning the
well-posedness of problem \cref{Helmholtz} (see, for example, 
\cite{Gilbarg1983})
and regularity properties of the nonlinear map $F_{\omega}$. We will
sketch the proofs of these results because we will need to keep track
of the dependencies of the constants involved on frequency. We invoke

\medskip\medskip

\begin{assumption}\label{Aprioribound1}
There exist two positive constants $B_1, B_2$ such that
\begin{equation}
   B_1 \leq c^{-2} \leq B_2\quad\text{in }\Omega.
\end{equation}
\end{assumption}
In the sequel of Section 2 $C = C(a, b,c,\dots)$  indicates that $C$ depends only on the parameters $a, b,c,\dots$ and  we will indicate different constants with the same letter $C$.
\medskip\medskip

\begin{proposition}\label{2-energy}
Let $\Omega$ be a bounded Lipschitz domain in $\mathbb{R}^3$, $f \in
L^{2}(\Omega)$, $g \in H^{1/2}(\partial\Omega)$ and $c^{-2}\in
L^{\infty}(\Omega)$ satisfying \cref{Aprioribound1}. Then, there
exists a discrete set $\Sigma_{c^{-2}} := \{ \tilde\lambda_n\quad
|\tilde\lambda_n > 0 ,\ \forall n \in \mathbb{N}\}$ such that, for every 
$\omega^2 \in \mathbb{C}\backslash \Sigma_{c^{-2}}$, there exists a
unique solution $u \in H^1(\Omega)$ of
\begin{equation} \label{Helmholtz1}
\left\{\begin{array}{rl}
   (-\Delta - \omega^2 c^{-2}(x)) u
                = &  f\ \ \quad  \mbox{ in } \Omega ,\\
   u = & g \quad \mbox{ on } \partial \Omega .
   \end{array}
   \right.
\end{equation}
Furthermore, there exists a positive constant $C$ such that
\begin{equation}\label{energy_est1}
\| u \|_{H^{1}(\Omega)} \le C\left(1+\frac{\omega^2}{\textrm{d}(\omega^2, \Sigma_{c^{-2}})}\right) \left( \| g \|_{H^{1/2}(\partial \Omega)} + \| f \|_{L^{2}(\Omega)} \right) ,
\end{equation}
where  $C=C(\Omega,B_2)$ and $\textrm{d}(\omega^2, \Sigma_{c^{-2}})$ indicates the distance of $\omega^2$ from $\Sigma_{c^{-2}}$.
\end{proposition}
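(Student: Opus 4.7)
My plan is to lift the boundary data to obtain a homogeneous Dirichlet problem, apply Fredholm-spectral theory to identify $\Sigma_{c^{-2}}$ and establish unique solvability, and then extract the energy bound from a weighted-$L^2$ resolvent estimate followed by a bootstrap to $H^1$. I pick $G\in H^1(\Omega)$ with $G|_{\partial\Omega}=g$ and $\|G\|_{H^1(\Omega)}\le C(\Omega)\|g\|_{H^{1/2}(\partial\Omega)}$; then $v=u-G\in H^1_0(\Omega)$ satisfies
\[
\int_\Omega \nabla v\cdot\nabla w\,\dd x \;-\; \omega^2\int_\Omega c^{-2}vw\,\dd x \;=\; \ell(w),\qquad w\in H^1_0(\Omega),
\]
where $\ell\in H^{-1}(\Omega)$ collects the contributions of $f$, $\nabla G$, and $\omega^2 c^{-2}G$, with $\|\ell\|_{H^{-1}(\Omega)}\le C(\Omega,B_2)(1+\omega^2)(\|f\|_{L^2(\Omega)}+\|g\|_{H^{1/2}(\partial\Omega)})$.

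Poincaré coercivity of $a_0(v,w)=\int\nabla v\cdot\nabla w$ on $H^1_0(\Omega)$, together with Rellich-Kondrachov compactness and $c^{-2}\in L^\infty(\Omega)$, place the reduced problem in the Fredholm framework; equivalently, the operator $T:L^2(\Omega)\to L^2(\Omega)$, $Tf=(-\Delta_D)^{-1}(c^{-2}f)$, is compact and self-adjoint in the weighted inner product $\langle\varphi,\psi\rangle_w=\int c^{-2}\varphi\psi\,\dd x$. Its positive eigenvalues $\tilde\lambda_n^{-1}$ accumulate only at $0$, its $w$-orthonormal eigenfunctions $\{\phi_n\}$ form a basis of $L^2(\Omega)$, and I set $\Sigma_{c^{-2}}=\{\tilde\lambda_n\}$. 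For $\omega^2\in\mathbb C\setminus\Sigma_{c^{-2}}$ the Fredholm alternative delivers a unique $v\in H^1_0(\Omega)$, hence a unique $u=v+G$. Expanding $v=\sum_n a_n\phi_n$ and testing against $\phi_n$ gives $a_n=\ell(\phi_n)/(\tilde\lambda_n-\omega^2)$, and the spectral theorem yields the weighted-$L^2$ resolvent bound $\|v\|_{L^2(\Omega)}\le C\,\textrm{d}(\omega^2,\Sigma_{c^{-2}})^{-1}\|\ell\|_\ast$. Testing the equation against $v$ itself then produces $\|\nabla v\|_{L^2}^2\le \omega^2 B_2\|v\|_{L^2}^2+\|\ell\|_{H^{-1}}\|\nabla v\|_{L^2}$; absorbing via Young's inequality and combining with the previous $L^2$ bound, then undoing the lifting, yields the claimed $\|u\|_{H^1(\Omega)}\le C(1+\omega^2/\textrm{d}(\omega^2,\Sigma_{c^{-2}}))(\|f\|_{L^2(\Omega)}+\|g\|_{H^{1/2}(\partial\Omega)})$.

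The main obstacle will be in the final step: the two sources of $\omega$-dependence — the factor $1/\textrm{d}$ from the resolvent and the $\omega^2$ scaling of both $\ell$ and the zeroth-order term of the Helmholtz operator — must combine into the compact form $1+\omega^2/\textrm{d}$, while the constant remains dependent only on $\Omega, B_1, B_2$ and not on finer features of $c^{-2}$. Because $\ell$ naturally lives in $H^{-1}$ (through $\Delta G$) rather than in $L^2$, I would split $\ell$ into its $L^2$ part (carrying $f$ and $\omega^2 c^{-2}G$) and its genuine $H^{-1}$ part (from $\nabla G$) and feed them into the resolvent separately, carefully tracking the $\omega$-factors so that no stronger cancellation is lost to overgenerous bounds.
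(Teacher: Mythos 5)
Your proposal follows essentially the same route as the paper's proof: lift the boundary datum, recast the homogeneous Dirichlet problem as a compact self-adjoint perturbation of the identity, invoke the Fredholm alternative to define $\Sigma_{c^{-2}}$ and obtain unique solvability, get the $L^2$ bound from a spectral expansion, and recover the gradient bound by testing against the solution. The one substantive difference is the ordering, and it matters exactly at the step you yourself flag as the ``main obstacle.'' Because you lift first, your functional $\ell$ carries a factor $(1+\omega^2)$ and your resolvent bound a factor $\mathrm{d}(\omega^2,\Sigma_{c^{-2}})^{-1}$; the product $(1+\omega^2)/\mathrm{d}$ is not literally of the form $1+\omega^2/\mathrm{d}$, and you leave the reconciliation as a plan rather than an argument. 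The paper sidesteps this by treating $g=0$ with $f\in L^2(\Omega)$ first: applying $(-\Delta)^{-1}$ to the equation makes the spectral multiplier $\frac{1}{1-\omega^2\alpha_n}=1+\frac{\omega^2}{\tilde\lambda_n-\omega^2}$, whose modulus is bounded by $1+\omega^2/\mathrm{d}(\omega^2,\Sigma_{c^{-2}})$ with no extra $\omega$-factor entering through the data, and only undoes the lifting at the very end. Your version can still be closed: by the eigenvalue bound $\tilde\lambda_1\geq\lambda_1/B_2$ of \cref{uniformconstants}, a two-case argument (according to whether $\mathrm{d}(\omega^2,\Sigma_{c^{-2}})$ exceeds $\tilde\lambda_1/2$ or not) gives $1+\omega^2\leq C(\Omega,B_2)\left(\mathrm{d}(\omega^2,\Sigma_{c^{-2}})+\omega^2\right)$, hence $(1+\omega^2)/\mathrm{d}\leq C(\Omega,B_2)(1+\omega^2/\mathrm{d})$. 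One further caution on your planned splitting of $\ell$: for a genuine $H^{-1}$ contribution the sum $\sum_n|\ell(\phi_n)|^2$ need not converge (in three dimensions $|\ell(\phi_n)|$ can grow like $\tilde\lambda_n^{1/2}$), so you should either use the $H^{-1}(\Omega)\to H^1_0(\Omega)$ resolvent bound, whose multiplier $\tilde\lambda_n/|\tilde\lambda_n-\omega^2|$ is again at most $1+\omega^2/\mathrm{d}$, or take $G$ to be the harmonic extension of $g$ so that $\ell$ lies in $L^2(\Omega)$.
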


\begin{proof}
We first prove the result for $g=0$. Consider the linear operators
$-\Delta:H^1_0(\Omega)\rightarrow H^{-1}(\Omega)$ and the multiplication operator 
\begin{equation}
\label{eq:mult}
\begin{aligned}
M_{c^{-2}}: L^2(\Omega) &\rightarrow L^2(\Omega)\,,\\
u &\rightarrow c^{-2}u
\end{aligned}
\end{equation}
respectively. We can now consider the operator $K=\Delta^{-1}M_{c^{-2}}: H^1_0(\Omega)\rightarrow H^1_0(\Omega)$.  The equation
\[(-\Delta - \omega^2 c^{-2}(x)) u
                =  f\;.
 \] for $u\in H_0^1(\Omega)$ is equivalent to
 \begin{equation}\label{opeq}
 (I-\omega^2K)u=\Delta^{-1}f
  \end{equation}
 Note that $ K: H^1_0(\Omega)\rightarrow H^1_0(\Omega)$ is compact by Rellich--Kondrachov compactness theorem.  
Furthermore, by \cref{Aprioribound1} and the properties of $\Delta^{-1}$  
it follows that $K$ is self-adjoint and positive. Hence,
$K$ has a discrete set of positive eigenvalues $\{ \alpha_n
\}_{n\in\mathbb{N}}$ such that $\alpha_n \rightarrow 0$ as $n
\rightarrow \infty$. Let $\tilde\lambda_n := \frac{1}{\alpha_n}, n
\in\mathbb{N}$ and define 
$\Sigma_{c^{-2}} := \{
\tilde{\lambda}_n\ :\ n \in \mathbb{N}\}$ and let $\omega^2 \in
\mathbb{C} \backslash \Sigma_{c^{-2}}$, and show that it satisfies the assumptions of this proposition. 
Then, by the Fredholm
alternative, there exists a unique solution $u \in H^1_0(\Omega)$ of
\cref{opeq}.

To prove estimate \cref{energy_est1} we observe that
$$
u = \sum_{n=1}^{\infty} \langle u,e_n \rangle e_n ,\quad
K u =\sum_{n=1}^{\infty}\alpha_n \langle u,e_n \rangle e_n
$$
where $\{e_n\}_{n\in\mathbb{N}}$ is an orthonormal basis of $L^2(\Omega)$.
Hence we can rewrite \cref{opeq} in the form
$$
\sum_{n=1}^{\infty}(1-\omega^2\alpha_n) \langle u,e_n \rangle e_n
=\sum_{n=1}^{\infty} \langle h,e_n \rangle e_n \text{ where } h =\Delta^{-1}f
$$
Hence,
$$
   \langle u,e_n \rangle
    = \frac{1}{1-\frac{\omega^2}{\tilde\lambda_n}}
    \langle h, e_n \rangle,\quad\forall n\in\mathbb{N}
$$
and
$$
   u=\sum_{n=1}^{\infty}
     \frac{1}{1-\frac{\omega^2}{\tilde\lambda_n}}
        \langle h,e_n \rangle e_n
$$
so that
\begin{equation}\label{est_u}
\|u\|_{L^2(\Omega)}\leq \left(1+\frac{\omega^2}{d(\omega^2, \Sigma_{c^{-2}})}\right)\|h\|_{L^2(\Omega)}\leq C\left(1+\frac{\omega^2}{d(\omega^2, \Sigma_{c^{-2}})}\right)\|f\|_{L^2(\Omega)}
\end{equation}
where $C=C(\Omega, B_2)$.

Now, by multiplying equation \cref{Helmholtz1} with $u$, integrating by parts, using Schwartz' inequality, \cref{Aprioribound1,est_u} it follows in the case $g=0$:
\begin{equation}\label{est_grad}
\|\nabla u\|_{L^2(\Omega)}\leq C\left(1+\frac{\omega^2}{d(\omega^2, \Sigma_{c^{-2}})}\right)\|f\|_{L^2(\Omega)}
\end{equation}
Hence, by \cref{est_u,est_grad} we finally get
$$
\|u\|_{H^1(\Omega)}\leq C\left(1+\frac{\omega^2}{d(\omega^2, \Sigma_{c^{-2}})}\right)\|f\|_{L^2(\Omega)}.
$$
If $g$ is not identically zero then we reduce the problem to the previous case
by considering $v=u-\tilde g$ where $\tilde g\in H^1(\Omega)$ is such
that $\tilde g=g$ on $\partial\Omega$ and $\|\tilde
g\|_{H^1(\Omega)}\leq \|g\|_{H^{1/2}(\partial\Omega)}$ and we derive easily the estimate
\[
\|u\|_{H^1(\Omega)}\leq C\left(1+\frac{\omega^2}{d(\omega^2, \Sigma_{c^{-2}})}\right)(\|f\|_{L^2(\Omega)}+\|g\|_{H^{1/2}(\partial\Omega)})
\]
which concludes the proof. \qquad 
\end{proof}
 
The constants appearing in the estimate of \cref{2-energy} depends on $c^{-2}$ 
and $ \Sigma_{c^{-2}}$ which are
unknown. To our purposes it would be convenient to have constants
depending only on a priori parameters $B_1$, $B_2$ and other known
parameters. Let us denote by $\Sigma_0$ the spectrum of $-\Delta$. Then, we have the following

\medskip\medskip

\begin{proposition}\label{uniformconstants}
Suppose that the assumptions of \cref{2-energy}  are satisfied. Let $\{\lambda_n\}_{n\in\mathbb{N}}$ denote the Dirichlet eigenvalues of $-\Delta$. Then, for any $n\in\mathbb{N}$,
\begin{equation} \label{eigenvaluebound}
\frac{\lambda_n}{B_2}\leq\tilde\lambda_n\leq\frac{\lambda_n}{B_1} .
 \end{equation}
If $\omega^2$ is such that,
 \begin{equation}\label{smallfr}
 0<\omega^2<\frac{\lambda_1}{B_2},
 \end{equation}
 or, for some $n\geq 1$,
 \begin{equation}\label{higherfr}
 \frac{\lambda_n}{B_1}< \omega^2< \frac{\lambda_{n+1}}{B_2} ,
 \end{equation}
then there exists a unique solution $u\in H^1(\Omega)$ of Problem \cref{Helmholtz} and the following estimate holds
\[
\| u \|_{H^{1}(\Omega)} \le C \left( \| g \|_{H^{1/2}(\partial \Omega)} + \| f \|_{L^{2}(\Omega)} \right) ,
\]
where  $C=C(B_1,B_2,\omega^2,  \Sigma_0)$.
\end{proposition}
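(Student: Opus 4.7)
The plan is to deduce both parts from the variational (min-max) characterization of the eigenvalues $\tilde\lambda_n$, which in turn gives a quantitative lower bound on $d(\omega^2,\Sigma_{c^{-2}})$ to which Proposition \ref{2-energy} can be applied.

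First I would reinterpret the eigenvalues of the compact operator $K$. Since $\alpha_n\langle Ku,e_n\rangle=\alpha_n\langle u,e_n\rangle$ means $-\Delta\phi_n=\tilde\lambda_n\,c^{-2}\phi_n$ in $H^1_0(\Omega)$, the numbers $\tilde\lambda_n$ are exactly the eigenvalues of the weighted Dirichlet problem $-\Delta\phi=\tilde\lambda\,c^{-2}\phi$. By the Courant--Fischer min-max principle,
\[
\tilde\lambda_n \;=\; \min_{\substack{V\subset H^1_0(\Omega)\\ \dim V=n}}\;\max_{u\in V\setminus\{0\}}\frac{\int_\Omega|\nabla u|^2\,\dd x}{\int_\Omega c^{-2}|u|^2\,\dd x},\qquad \lambda_n \;=\; \min_{\substack{V\subset H^1_0(\Omega)\\ \dim V=n}}\;\max_{u\in V\setminus\{0\}}\frac{\int_\Omega|\nabla u|^2\,\dd x}{\int_\Omega|u|^2\,\dd x}.
\]
Using \cref{Aprioribound1}, which gives $B_1\int|u|^2\leq\int c^{-2}|u|^2\leq B_2\int|u|^2$, the two Rayleigh quotients differ by a factor between $1/B_2$ and $1/B_1$ pointwise in $u$; taking min-max over the same subspaces yields \eqref{eigenvaluebound}.

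Next I would localize $\omega^2$ relative to $\Sigma_{c^{-2}}$. If $0<\omega^2<\lambda_1/B_2$, then by \eqref{eigenvaluebound} $\omega^2<\tilde\lambda_1\leq\tilde\lambda_n$ for every $n$, so $\omega^2\notin\Sigma_{c^{-2}}$ and
\[
d(\omega^2,\Sigma_{c^{-2}}) \;\geq\; \tilde\lambda_1-\omega^2 \;\geq\; \frac{\lambda_1}{B_2}-\omega^2.
\]
If $\lambda_n/B_1<\omega^2<\lambda_{n+1}/B_2$, then \eqref{eigenvaluebound} gives $\tilde\lambda_n\leq\lambda_n/B_1<\omega^2<\lambda_{n+1}/B_2\leq\tilde\lambda_{n+1}$, whence
\[
d(\omega^2,\Sigma_{c^{-2}}) \;\geq\; \min\!\left(\omega^2-\frac{\lambda_n}{B_1},\;\frac{\lambda_{n+1}}{B_2}-\omega^2\right)>0.
\]
In both cases the lower bound on $d(\omega^2,\Sigma_{c^{-2}})$ depends only on $B_1,B_2,\omega^2$ and $\Sigma_0=\{\lambda_n\}$.

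Finally I would simply insert this lower bound into the estimate \eqref{energy_est1} of \cref{2-energy}: existence and uniqueness of $u\in H^1(\Omega)$ are immediate from that proposition since $\omega^2\notin\Sigma_{c^{-2}}$, and the prefactor $1+\omega^2/d(\omega^2,\Sigma_{c^{-2}})$ is controlled by a constant $C=C(B_1,B_2,\omega^2,\Sigma_0)$, yielding the stated estimate. The only delicate point is the identification of $\tilde\lambda_n$ with the eigenvalues of the weighted problem and the clean application of min-max; once that is in place, the interlacing with $\lambda_n$ and the distance estimate are straightforward.
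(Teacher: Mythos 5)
Your proposal is correct and follows essentially the same route as the paper: both prove \eqref{eigenvaluebound} by comparing the weighted and unweighted Rayleigh quotients via \cref{Aprioribound1} and applying the Courant--Fischer min-max principle, and then deduce well-posedness from the resulting spectral gap together with \cref{2-energy}. If anything, your version is slightly cleaner in spelling out the explicit lower bound on $d(\omega^2,\Sigma_{c^{-2}})$ in each of the two frequency regimes, a step the paper compresses into ``the claim follows.''
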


\begin{proof}
To derive estimate \cref{eigenvaluebound} we consider the Rayleigh
quotient related to equation \cref{Helmholtz}
$$
\frac{\int_{\Omega}|\nabla v|^2}{\int_{\Omega}c^{-2}v^2} .
$$
By \cref{Aprioribound1}, for any non trivial $v\in
H^1_0(\Omega)$ we have
$$
\frac{1}{B_2}\frac{\int_{\Omega}|\nabla v|^2}{\int_{\Omega}v^2}\leq
\frac{\int_{\Omega}|\nabla v|^2}{\int_{\Omega}c^{-2}v^2}\leq
\frac{1}{B_1}\frac{\int_{\Omega}|\nabla v|^2}{\int_{\Omega}v^2}\;.
$$

Now, we apply Courant-Rayleigh minimax principle (see for instance \cite[Theorem 4.5.1]{Dav95}, where the infinite dimensional 
Courant-Rayleigh minimax principle has been considered): The following arguments are similar as in the simple one-dimensional 
Example of Davies' book \cite[Example 4.6.1]{Dav95}.
Due to \cref{Aprioribound1} the Hilbert space
\begin{equation*}
 L_c^2 (\Omega) = \{ v : \int_{\Omega}c^{-2}v^2 < \infty\}\,, 
\end{equation*}
with norm $\norm{v}_{L_c^2} = \int_\Omega v^2 c^{-2}$ is equivalent to $L^2(\Omega)$.
\begin{equation*}
 \begin{aligned}
  \tilde{\lambda}_n &:= \inf_{\{\tu_1,\cdots,\tu_n \in H^1_0(\Omega)\}} 
  \sup_{v \in \text{span} \{\tu_1,\cdots,\tu_n\}: \norm{v}_{L^2_c} \leq 1} \frac{\int_{\Omega}|\nabla v|^2}{\int_{\Omega}c^{-2}v^2 }\,,\\
  \lambda_n &:= \inf_{\{u_1,\cdots,u_n \in H^1_0(\Omega)\}} 
  \sup_{v \in \text{span} \{u_1,\cdots,u_n\} : \norm{v}_{L^2} \leq 1} \frac{\int_{\Omega}|\nabla v|^2}{\int_{\Omega}v^2}\;.
 \end{aligned}
\end{equation*}
Note that $\norm{v}_{L^2} \leq 1$ implies that $\norm{v}_{L_c^2}^2 \leq B_2$ and that $L_c^2(\Omega)=L^2(\Omega)$.
Therefore
\begin{equation*}
  \lambda_n \leq \inf_{\{u_1,\cdots,u_n \in H^1_0(\Omega)\}} 
  \sup_{v \in \text{span} \{u_1,\cdots,u_n\} : \norm{v}_{L_c^2}^2 \leq B_2} \frac{\int_{\Omega}|\nabla v|^2}{\int_{\Omega}v^2}\;.
\end{equation*}
Now, using the scale invariance of $\frac{\int_{\Omega}|\nabla v|^2}{\int_{\Omega}v^2}$ and that $c^{-2}\leq B_2$, we get
\begin{equation*}
  \lambda_n \leq B_2 \inf_{\{u_1,\cdots,u_n \in H^1_0(\Omega)\}} 
  \sup_{v \in \text{span} \{u_1,\cdots,u_n\} : \norm{v}_{L_c^2} \leq 1} \frac{\int_{\Omega}|\nabla v|^2}{\int_{\Omega}c^{-2} v^2} = B_2 \tilde{\lambda}_n\;.
\end{equation*}
To get lower bound estimate for $\tilde{\lambda}_n$ observe that if $\norm{v}_{L_c^2} \leq 1$ then 
$\norm{v}_{L^2}^2 \leq \frac{1}{B_1}$. Hence
\begin{equation*}
  \tilde \lambda_n \leq \inf_{\{\tu_1,\cdots,\tu_n \in H^1_0(\Omega)\}} 
  \sup_{v \in \text{span} \{\tu_1,\cdots,\tu_n\} : \norm{v}_{L^2}^2 \leq \frac{1}{B_1}} 
  \frac{\int_{\Omega}|\nabla v|^2}{\int_{\Omega}c^{-2} v^2}\;.
\end{equation*}
Now, using the scale invariance of $\frac{\int_{\Omega}|\nabla v|^2}{\int_{\Omega}v^2}$ and that $c^{-2}\geq B_1$, we get
\begin{equation*}
  \tilde\lambda_n \leq \inf_{\{\tu_1,\cdots,\tu_n \in H^1_0(\Omega) \}} 
  \sup_{v \in \text{span} \{\tu_1,\cdots,\tu_n\}: \norm{v}_{L^2} \leq 1} \frac{\int_{\Omega}|\nabla v|^2}{\int_{\Omega}c^{-2} v^2} = \frac{1}{B_1} \lambda_n\;.
\end{equation*}
Thus we have shown that
$$
\frac{\lambda_n}{B_2}\leq \tilde{\lambda}_n\leq \frac{\lambda_n}{B_1},\quad \forall n\in\mathbb{N}.
$$
Hence, we have well-posedness of problem \cref{Helmholtz} if we select
an $\omega^2$ satisfying \cref{smallfr} or \cref{higherfr} and the claim follows.
\end{proof}

We observe that in order to derive the uniform estimates of 
\cref{uniformconstants} we need to assume that either 
the frequency is small \cref{smallfr} or that the oscillation of $c^{-2}$ is sufficiently small \cref{higherfr}. 
This observation can also been 
found in Davies' book \cite{Dav95}.

\medskip\medskip

In the seismic application we have in mind we might know the spectrum 
of some reference wavespeed $c_0^{-2}$. The following local result holds
\medskip\medskip

\begin{proposition}\label{continuity}
Let $\Omega$ and $c_0^{-2}$ satisfy the assumptions of
\cref{2-energy} and let
$\omega^2\in\mathbb{C}\backslash\Sigma_{c_0^{-2}}$ where
$\Sigma_{c_0^{-2}}$ is the Dirichlet spectrum of equation
\cref{Helmholtz} corresponding to $c_0^{-2}$. Then, there exists $\delta=\delta(\Omega,\omega^2, B_2,\Sigma_{c_0^{-2}})>0$ such
that, if
\[
\|c^{-2}-c_0^{-2}\|_{L^{\infty}(\Omega)}\leq \delta,
\]
then $\omega^2\in \mathbb{C}\backslash\Sigma_{c^{-2}}$ and the solution $u$ of Problem \cref{Helmholtz1} corresponding to $c^{-2}$ satisfies
\[
\| u \|_{H^{1}(\Omega)} \le C\left(1+\frac{\omega^2}{\textrm{d}(\omega^2, \Sigma_{c_0^{-2}})}\right) \left( \| f \|_{L^{2}(\Omega)} + \| g \|_{H^{1/2}(\partial \Omega)} \right),
\]

 $C=C(\Omega, B_2)$.

\end{proposition}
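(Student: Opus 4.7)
The plan is to treat the coefficient $c^{-2}$ as a small $L^{\infty}$-perturbation of $c_0^{-2}$ and apply \cref{2-energy} to $c_0^{-2}$ together with a Neumann-series argument. Assume first $g=0$. Rewriting \cref{Helmholtz1} as
\[
(-\Delta - \omega^2 c_0^{-2})\,u \;=\; f + \omega^2 (c^{-2} - c_0^{-2})\,u \qquad \text{in }\Omega,
\]
and using that $\omega^2 \notin \Sigma_{c_0^{-2}}$, the operator $L_0 := -\Delta - \omega^2 c_0^{-2}$ is invertible on $H^1_0(\Omega)$, and \cref{energy_est1} applied with coefficient $c_0^{-2}$ yields
\[
\|L_0^{-1} h\|_{H^1(\Omega)} \;\leq\; K_0 \,\|h\|_{L^2(\Omega)}, \qquad K_0 := C(\Omega,B_2)\Bigl(1 + \tfrac{\omega^2}{\mathrm{d}(\omega^2,\Sigma_{c_0^{-2}})}\Bigr),
\]
for every $h \in L^2(\Omega)$. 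Thus the problem is equivalent to the fixed-point equation $(I-T)u = L_0^{-1} f$ on $H^1_0(\Omega)$, where $T u := L_0^{-1}\bigl[\omega^2 (c^{-2}-c_0^{-2}) u\bigr]$.

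The next step is to make $T$ a contraction. Multiplication by $\omega^2(c^{-2}-c_0^{-2}) \in L^{\infty}(\Omega)$ sends $H^1_0(\Omega)$ into $L^2(\Omega)$ with norm at most $\omega^2 \delta$, so
\[
\|T u\|_{H^1(\Omega)} \;\leq\; K_0\,\omega^2 \delta\, \|u\|_{H^1(\Omega)}.
\]
Choose $\delta = \delta(\Omega,\omega^2,B_2,\Sigma_{c_0^{-2}}) > 0$ so that $K_0 \omega^2 \delta \leq \tfrac{1}{2}$; then $I - T$ is invertible on $H^1_0(\Omega)$ via Neumann series with $\|(I-T)^{-1}\| \leq 2$. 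This in particular forces the homogeneous problem for $c^{-2}$ to have only the trivial solution, so $\omega^2 \notin \Sigma_{c^{-2}}$, and
\[
\|u\|_{H^1(\Omega)} \;\leq\; 2\,\|L_0^{-1} f\|_{H^1(\Omega)} \;\leq\; 2 K_0\,\|f\|_{L^2(\Omega)},
\]
which is the claim for $g=0$.

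For $g\not\equiv 0$ the reduction to the zero-boundary case proceeds exactly as in the proof of \cref{2-energy}: pick a lift $\tilde{g}\in H^1(\Omega)$ with trace $g$ and $\|\tilde{g}\|_{H^1(\Omega)} \leq \|g\|_{H^{1/2}(\partial\Omega)}$, set $v = u - \tilde{g} \in H^1_0(\Omega)$, and observe that $v$ solves an equation of the same form with coefficient $c^{-2}$ and modified right-hand side whose norm is bounded by $\|f\|_{L^2(\Omega)} + (1+\omega^2 B_2)\|g\|_{H^{1/2}(\partial\Omega)}$; running the same Neumann-series argument on $v$ and undoing the lift gives the stated bound. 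The main obstacle is keeping the contraction threshold $\delta$ depending only on the declared parameters $(\Omega,\omega^2,B_2,\Sigma_{c_0^{-2}})$, and this is precisely what the structure of the argument buys us: every invocation of \cref{2-energy} is for the \emph{reference} operator $L_0$, so no a priori information on $c^{-2}$ beyond its $L^\infty$-proximity to $c_0^{-2}$ is ever needed.
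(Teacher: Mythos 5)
Your proof is correct and takes essentially the same approach as the paper: both treat $\omega^2(c^{-2}-c_0^{-2})$ as a perturbation of the invertible reference operator $L_0=-\Delta-\omega^2 c_0^{-2}$, bound the resulting operator $K=\omega^2 L_0^{-1}M_{c^{-2}-c_0^{-2}}$ by $C\omega^2\bigl(1+\tfrac{\omega^2}{d_0}\bigr)\delta$ using \cref{energy_est1}, and choose $\delta$ to make it a contraction so that a Neumann series gives invertibility and the estimate. The only cosmetic difference is that the paper sets up the fixed-point equation for the correction $v=u-u_0$ (with $u_0$ the solution for $c_0^{-2}$) rather than for $u$ itself.
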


\begin{proof}
Let $\delta_c:=c^{-2}-c_0^{-2}$ and consider  $u_0\in H^{1}(\Omega)$ the unique solution of \cref{Helmholtz1} for $c_0^{-2}$ and consider the problem
\begin{equation}\label{auxiliarypr}
\left\{
\begin{array}{rl}
-\Delta v-\omega^2c_0^{-2}v-\omega^2\delta_c v= &\omega^2u_0\delta_c \quad \text{in } \Omega, \\
v= & 0, \quad \text{on } \partial \Omega .
\end{array}
\right.
\end{equation}
Let now 
\begin{equation*}
  L :=-\Delta-\omega^2c_0^{-2}
\end{equation*}
then, by assumption, it is invertible from $H_0^1(\Omega)$ to $L^2(\Omega)$ and we can rewrite
problem \cref{auxiliarypr} in the form
\begin{equation}\label{auxiliaryeq}
(I-K)v=h ,
\end{equation}
where $K=\omega^2L^{-1}M_{\delta_c}$ and $M_{\delta_c}$ is the
multiplication operator defined in \cref{eq:mult} and $h=L^{-1}(\omega^2u_0\delta_c)$. Observe
now that from \cref{energy_est1} $\|L^{-1}\|\leq C(1+\frac{\omega^2}{d_0})$ with $C=C(\Omega,B_2)$ and where $d_0=\textrm{dist}(\omega^2, \Sigma_{c_0^{-2}})$. Hence, we derive
\[
\|K\|\leq \omega^2\|L^{-1}\|\|M_{\delta_c}\|\leq \omega^2\|L^{-1}\| \delta
   \leq C \omega^2(1+\frac{\omega^2}{d_0})\delta.
\]
Hence, choosing $\delta=\frac{1}{2}(
C\omega^2(1+\frac{\omega^2}{d_0}))^{-1}$  the bounded operator $K$ has norm smaller than one. Hence, $I-K$ is invertible and there exists
a unique solution $v$ of \cref{auxiliaryeq} in $H^1_0$ satisfying
\cref{energy_est1} with $C=C (B_2, \omega^2, \Omega, d_0)$ and since
$u=u_0+v$ the statement follows. \qquad 
\end{proof}

\medskip\medskip

Let $\omega^2$ be such that either 
\begin{equation*}
 0 < \omega^2 < \frac{\lambda_1}{B_2},
\end{equation*}
or for some $n \geq 1$ 
\begin{equation*}
 \frac{\lambda_{n}}{B_1} < \omega^2 < \frac{\lambda_{n+1}}{B_2},
\end{equation*}
and let 
\begin{equation*}
 {\mathcal W} := \{ c^{-2} \in L^\infty(\Omega): B_1 \leq c^{-2} \leq B_2 \}\;.
\end{equation*}
Then the direct operator
\[
\begin{aligned}
  F_\omega : {\mathcal W}  & \rightarrow  \mathcal{L}(H^{1/2}(\partial \Omega), H^{-1/2}(\partial \Omega)),
  \\
  c^{-2} & \mapsto \Lambda_{\omega^2 c^{-2}},
\end{aligned}
\]
is well defined.

We will examine regularity properties of $F_\omega$ in the following lemmas. 
We will show the Fr\'echet differentiability of it.

\medskip\medskip

\begin{lemma}[Fr\'echet differentiability]\label{Frechet-diff}
Let $c^{-2} \in L^{\infty}(\Omega)$ satisfy
\cref{Aprioribound1}. Assume that $\omega^2 \in \mathbb{C}\backslash \Sigma_{c^{-2}}$. 
Then, the direct operator $F_\omega$ is Fr\'echet differentiable at
$c^{-2}$ and  its Fr\'echet derivative $DF_\omega(c^{-2})$ satisfies
\begin{equation}\label{L-bd}
\| DF_\omega[c^{-2}]\|_{\mathcal{L}(L^{\infty}(\Omega), \mathcal{L}(H^{1/2}(\partial\Omega), H^{-1/2}(\partial\Omega)))} \le C \omega^2\left(1+\frac{\omega^2}{\textrm{d}(\omega^2, \Sigma_{c^{-2}})}\right)^2
\end{equation}
where  $C=C(\Omega, B_2)$.
\end{lemma}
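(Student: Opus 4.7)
My plan is to introduce the natural linearization of the forward map, obtained formally by differentiating the Helmholtz equation in the coefficient, then verify separately the two claims: (a) this candidate operator $DF_\omega[c^{-2}]$ is bounded with the stated norm, and (b) the true remainder is quadratically small in $\|h\|_{L^\infty}$.

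\textbf{Step 1: candidate derivative.} For $h \in L^\infty(\Omega)$ and $g \in H^{1/2}(\partial\Omega)$, let $u$ solve \cref{Helmholtz} and define $w = w[h,g] \in H^1_0(\Omega)$ as the unique solution, guaranteed by \cref{2-energy} applied with right-hand side $f=\omega^2 h u$, of
\begin{equation*}
\bigl(-\Delta - \omega^2 c^{-2}\bigr) w \;=\; \omega^2 h u \quad \text{in }\Omega, \qquad w = 0 \quad \text{on }\partial\Omega.
\end{equation*}
Since $\Delta w = -\omega^2 c^{-2} w - \omega^2 h u \in L^2(\Omega)$, the normal derivative is defined in $H^{-1/2}(\partial\Omega)$. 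I then set $DF_\omega[c^{-2}](h)\,g := \partial_\nu w|_{\partial\Omega}$; this is linear in $h$ and in $g$ and defines an operator in $\mathcal{L}(H^{1/2}(\partial\Omega), H^{-1/2}(\partial\Omega))$.

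\textbf{Step 2: the operator norm bound.} Chaining \cref{energy_est1} twice, first for $u$ with source $f=0$ and boundary datum $g$, then for $w$ with boundary datum $0$ and source $\omega^2 h u \in L^2(\Omega)$, yields
\begin{equation*}
  \|w\|_{H^1(\Omega)} \;\le\; C\Bigl(1+\tfrac{\omega^2}{d(\omega^2,\Sigma_{c^{-2}})}\Bigr)\omega^2 \|h\|_{L^\infty}\|u\|_{L^2}
  \;\le\; C\omega^2\Bigl(1+\tfrac{\omega^2}{d(\omega^2,\Sigma_{c^{-2}})}\Bigr)^{\!2}\|h\|_{L^\infty}\|g\|_{H^{1/2}(\partial\Omega)}.
\end{equation*}
Combined with the trace estimate $\|\partial_\nu w\|_{H^{-1/2}(\partial\Omega)} \le C(\|w\|_{H^1}+\|\Delta w\|_{L^2})$ and the bound on $\Delta w = -\omega^2(c^{-2}w + hu)$, which only reproduces the same factors, this gives precisely \cref{L-bd}.

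\textbf{Step 3: quadratic remainder.} For $h \in L^\infty(\Omega)$ with $\|h\|_{L^\infty}$ below the threshold $\delta$ supplied by \cref{continuity}, $\omega^2 \notin \Sigma_{c^{-2}+h}$ and the solution $u_h$ of \cref{Helmholtz} with coefficient $c^{-2}+h$ is well defined. A direct computation shows that $r := u_h - u - w$ satisfies
\begin{equation*}
  \bigl(-\Delta - \omega^2(c^{-2}+h)\bigr) r \;=\; \omega^2 h w \quad \text{in }\Omega, \qquad r = 0 \quad \text{on }\partial\Omega.
\end{equation*}
Applying the uniform estimate of \cref{continuity} to $r$, then substituting the $H^1$-bound on $w$ from Step~2, produces $\|r\|_{H^1}=O(\|h\|_{L^\infty}^2\|g\|_{H^{1/2}})$, with constant of the same form as \cref{L-bd}. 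The trace estimate applied once more, using $\Delta r \in L^2$ with norm controlled by $\|h\|_{L^\infty}\|w\|_{L^2}$, yields $\|\partial_\nu r\|_{H^{-1/2}(\partial\Omega)}=O(\|h\|_{L^\infty}^2\|g\|_{H^{1/2}(\partial\Omega)})$. Since $\Lambda_{\omega^2(c^{-2}+h)}g - \Lambda_{\omega^2 c^{-2}}g - DF_\omega[c^{-2}](h)\,g = \partial_\nu r|_{\partial\Omega}$, Fr\'echet differentiability is established together with the claimed norm bound.

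The only delicate step is the bookkeeping of the frequency factors to match the exponent $2$ on $(1+\omega^2/d)$ in \cref{L-bd}; the main structural obstacle is the trace estimate for the normal derivative, which must be applied to $w$ (and to $r$) viewed as weak solutions of an equation with $L^2$ right-hand side rather than as $H^2$ functions, since $\partial\Omega$ is only Lipschitz and $c^{-2}$ is only $L^\infty$.
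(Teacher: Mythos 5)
Your overall architecture is sound and genuinely different from the paper's: you define the candidate derivative as the generalized normal derivative of the solution $w$ of the linearized boundary value problem and control everything in the strong form, whereas the paper never leaves the weak formulation --- it defines $DF_\omega[c^{-2}](h)$ through the duality pairing $\langle DF_\omega[c^{-2}](h)g,f\rangle=\omega^2\int_\Omega h\,\tilde u v\,\dd x$ (Alessandrini's identity) and estimates the remainder by pairing $u-\tilde u$ against the second solution $v$. Your Step 1 and Step 3 are correct: the equation for $r=u_h-u-w$ is right, \cref{continuity} gives $\|r\|_{H^1}=O(\|h\|_{L^\infty}^2)$, and Fr\'echet differentiability follows.

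The gap is in Step 2, in the claim that bounding $\|\Delta w\|_{L^2}$ ``only reproduces the same factors.'' It does not. The trace estimate forces you to control $\|\Delta w\|_{L^2}\le \omega^2 B_2\|w\|_{L^2}+\omega^2\|h\|_{L^\infty}\|u\|_{L^2}$, and the first term, after substituting your own bound on $\|w\|_{L^2}$, is of size $B_2\,\omega^4\bigl(1+\omega^2/d(\omega^2,\Sigma_{c^{-2}})\bigr)^2\|h\|_{L^\infty}\|g\|_{H^{1/2}(\partial\Omega)}$ --- an extra factor of $\omega^2$ beyond \cref{L-bd}. Since the distance $d(\omega^2,\Sigma_{c^{-2}})$ can be comparable to $|\omega^2|$ (e.g.\ for $\omega^2$ far from the real axis), this factor cannot be absorbed into $(1+\omega^2/d)^2$, so your argument proves a strictly weaker operator-norm bound than the one stated; the same loss reappears when you apply the trace estimate to $r$ (harmless there, since any $O(\|h\|^2)$ suffices for differentiability, but fatal for \cref{L-bd}). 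The whole point of the lemma is the precise frequency dependence, so this matters. The fix is to abandon the crude trace inequality and instead evaluate $\langle\partial_\nu w,f\rangle=\int_\Omega\Delta w\,v+\int_\Omega\nabla w\cdot\nabla v$ with the extension $v$ chosen as the solution of $(-\Delta-\omega^2c^{-2})v=0$ with $v=f$ on $\partial\Omega$; Green's identity then collapses the pairing to $\pm\,\omega^2\int_\Omega h\,u\,v\,\dd x$, and Cauchy--Schwarz together with two applications of \cref{energy_est1} in $L^2$ gives exactly $C\omega^2(1+\omega^2/d)^2$. At that point you have reproduced the paper's proof.
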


\begin{proof}
Consider $c^{-2} + \delta c^{-2}$. Then, from \cref{continuity}, if $\|\delta c^{-2}\|_{L^{\infty}}(\Omega)$ is small enough, $\omega^2\notin \Sigma_{c^{-2}+\delta c^{-2}}$. An application of Alessandrini's identity then gives
\begin{equation}\label{ident-1}
\langle (\Lambda_{\omega^2(c^{-2} + {\delta c}^{-2})} - \Lambda_{\omega^2 c^{-2}})g \, , \, h \rangle = \omega^2 \int_\Omega {\delta c}^{-2} \, u v \,\dd x ,
\end{equation}
where where $\langle \cdot, \cdot \rangle$ is the dual pairing with respect
to $H^{-1/2}(\partial \Omega)$ and $H^{1/2}(\partial \Omega)$ and $u$ and $v$ solve the boundary value problems,
\[
\left\{
\begin{array}{rl}
(-\Delta - \omega^2(c^{-2} + {\delta c}^{-2}))u = & 0, \quad x\in \Omega, \\
u= & g, \quad x\in \partial \Omega,
\end{array}
\right.
\]
and
\[
\left\{
\begin{array}{rl}
(-\Delta - \omega^2 c^{-2})v = & 0, \quad x\in \Omega, \\
v = & h, \quad x\in \partial \Omega,
\end{array}
\right.
\]
respectively. We first show that the map $F_{\omega}$ is Fr\'echet differentiable and that the Fr\'echet derivative is given by 
\begin{equation}\label{Fre-DF}
\langle DF_\omega[c^{-2}]({\delta c}^{-2})  g\, , \, h\rangle = \omega^2 \int_\Omega \delta c^{-2} \, \tilde{u} v \,\dd x,
\end{equation}
where $\tilde{u}$ solves the equation
\[
\left\{
\begin{array}{rl}
(-\Delta - \omega^2 c^{-2})\tilde{u} = & 0, \quad x\in \Omega, \\
\tilde{u} = & g, \quad x\in \partial \Omega.
\end{array}
\right.
\]
In fact, by \cref{ident-1}, we have that
\begin{equation}\label{break-diff-lp}
  \langle (\Lambda_{\omega^2(c^{-2} + {\delta c}^{-2})} - \Lambda_{\omega^2 c^{-2}})g \, , \, h \rangle -  \omega^2\int_\Omega {\delta c}^{-2} \, \tilde{u} v \,\dd x
   = \omega^2 \int_\Omega {\delta c}^{-2} \, (u - \tilde{u}) v \,\dd x.
   %
\end{equation}
We note that $u-\tilde{u}$ solves the equations
\[
\left\{
\begin{array}{rll}
(-\Delta - \omega^2 c^{-2})(u - \tilde{u}) = & -\omega^2 {\delta c}^{-2} \,  u , & \quad x\in \Omega, \\
u - \tilde{u} = & 0, & \quad x\in \partial \Omega.
\end{array}
\right.
\]

Using the fact that $u-\tilde u$ and $v$ are in $H^1(\Omega)$ and that $\delta c^{-2}\in L^{\infty}(\Omega)$ 
and applying Cauchy-Schwarz inequality, we get 
\begin{equation}\label{ineq:1}
 \left|\omega^2 \int_\Omega {\delta c}^{-2} \, (u - \tilde{u}) v \,\dd x \right|
  \leq \omega^2 \|{\delta c}^{-2}\|_{L^{\infty}(\Omega)}\|u - \tilde{u}\|_{L^{2}(\Omega)} \|v\|_{L^{2}(\Omega)}.
\end{equation}
Finally, using the stability estimates of \cref{2-energy} applied to $u - \tilde{u}$ and to $v$ and the stability estimates of \cref{continuity} applied to $u$ we derive
\begin{equation}\label{ineq}
 \left|\omega^2 \int_\Omega {\delta c}^{-2} \, (u - \tilde{u}) v \,\dd x \right|
  \leq C\omega^4 \left(1+\frac{\omega^2}{\textrm{d}(\omega^2, \Sigma_{c^{-2}})}\right)^3 \|{\delta c}^{-2}\|^2_{L^{\infty}(\Omega)} \|g\|_{H^{1/2}(\partial\Omega)} \|h\|_{H^{1/2}(\partial\Omega)}.
\end{equation}
Hence
\[
\begin{aligned}
~ & \left|\langle (\Lambda_{\omega^2(c^{-2} + {\delta c}^{-2})} - \Lambda_{\omega^2 c^{-2}})g \, , \, h \rangle - \omega^2 \int_\Omega {\delta c}^{-2} \, \tilde{u} v \,\dd x\right| \\
\leq & C\omega^4\left(1+\frac{\omega^2}{\textrm{d}(\omega^2, \Sigma_{c^{-2}})}\right)^3\|{\delta c}^{-2}\|^2_{L^{\infty}(\Omega)} \|g\|_{H^{1/2}(\partial\Omega)} \|h\|_{H^{1/2}(\partial\Omega)},
\end{aligned}
\]
which proves differentiability.\\
Finally by 
\[
\langle DF_\omega[c^{-2}]({\delta c}^{-2})  g\, , \, h\rangle = \omega^2 \int_\Omega \delta c^{-2} \, \tilde{u} v \,\dd x,
\]
and we get
\begin{equation*}
\begin{aligned}
\left|\langle DF_\omega[c^{-2}]({\delta c}^{-2})  g\, , \, h\rangle\right|
\leq  & \omega^2  \|{\delta c}^{-2}\|_{L^{\infty}(\Omega)}\|\tilde{u}\|_{L^{2}(\Omega)} \|v\|_{L^{2}(\Omega)} \\
\leq  & \omega^2 \left(1+\frac{\omega^2}{\textrm{d}(\omega^2, \Sigma_{c^{-2}})}\right)^2 \|{\delta c}^{-2}\|_{L^{\infty}(\Omega)} \|g\|_{H^{1/2}(\partial\Omega)} \|h\|_{H^{1/2}(\partial\Omega)}.
\end{aligned}
\end{equation*}
from which \cref{L-bd} follows. \qquad
\end{proof}

\medskip\medskip

\subsection{Conditional quantitative Lipschitz stability estimate}

Let $B_2, r_0, r_1, A, L, N$ be positive with $N \in
\mathbb{N},  N \geq 2$, $r_0 < 1$. In the sequel we will refer to these numbers as to
the a priori data. To prove the results of this section we invoke the
following common assumptions

\medskip\medskip

\begin{assumption}\label{assumption_domain}
$\Omega \subset \mathbb{R}^3$ is a bounded domain such that
\[
   |x| \leq Ar_1, \quad\forall x\in \Omega.
 \]
Moreover,
\[
   \partial\Omega \text{  of Lipschitz class with  constants } r_1
   \text{ and } L.
\]
Let $\mathcal{D}_N$ be a partition of $\Omega$ given by
\begin{equation}
   \mathcal{D}_N \triangleq \bigg\lbrace\{D_1,D_2, \dots, D_N\} \mid
   \bigcup_{j=1}^N \overline{D}_j = \Omega \,\, , \,\, (D_j \cap D_{j'})^\circ =
   \emptyset, j\neq j' \bigg\rbrace
\end{equation}
such that
\[
   \{\partial D_j\}_{j=1}^{N}
   \text{ is of Lipschitz class with  constants } r_0
   \text{ and } L.
\]
\end{assumption}

\medskip\medskip

\begin{assumption}\label{assumption_potential}
The function $c^{-2} \in \mathcal{W}_N$, that is, it satisfies
\[
B_1\leq c^{-2}\leq B_2,\quad \text{in }\Omega
\]
and is of the form
\[
   c^{-2}(x) = \sum_{j = 1}^N c_j \chi_{D_j}(x) ,
\]
where $c_j, j=1,\dots,N$ are unknown numbers and $\{D_1,\dots ,D_N\}\in \mathcal{D}_N$.
\end{assumption}

\medskip\medskip

\begin{assumption}\label{wellpos}
Assume
\[
 0<\omega^2<\frac{\lambda_1}{B_2},
\]
 or, for some $n\geq 1$,
\[
 \frac{\lambda_n}{B_1}< \omega^2< \frac{\lambda_{n+1}}{B_2}. 
\]

\end{assumption}

Under the above assumptions we can state the following preliminary result

\medskip\medskip

\begin{lemma}\label{propHs'}
Let $\Omega$ and $ \mathcal{D}_N$ satisfy \cref{assumption_domain} and let $c^{-2}\in \mathcal{W}_N$.
Then, for every $s'\in
(0,1/2)$, there exists a positive constant $C$ with $C=C(L,s')$ such
that
\begin{equation}\label{Hsbound}
\|c^{-2}\|_{H^{s'}(\Omega)}\leq  C(L,s')\frac{1}{r_0^{s'}}\|c^{-2}\|_{L^{2}(\Omega)} .
\end{equation}
\end{lemma}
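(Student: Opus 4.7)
\smallskip

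My plan is to use the Slobodeckij characterization
\[
\|c^{-2}\|_{H^{s'}(\Omega)}^{2}
 = \|c^{-2}\|_{L^{2}(\Omega)}^{2}
 + \int_{\Omega}\!\int_{\Omega}\frac{|c^{-2}(x)-c^{-2}(y)|^{2}}{|x-y|^{3+2s'}}\,\dd x\,\dd y,
\]
and exploit the fact that $c^{-2}$ is constant on each $D_{j}$, so the double integral is supported on pairs $(x,y)$ belonging to \emph{different} pieces of the partition. Writing $c^{-2}=\sum_{j} c_{j}\chi_{D_{j}}$, only cross-terms survive and I would rewrite the seminorm as
\[
[c^{-2}]_{H^{s'}(\Omega)}^{2}
 = \sum_{j\neq k}(c_{j}-c_{k})^{2}\int_{D_{j}}\!\int_{D_{k}}\frac{\dd y\,\dd x}{|x-y|^{3+2s'}}.
\]
The elementary inequality $(c_{j}-c_{k})^{2}\le 2(c_{j}^{2}+c_{k}^{2})$ together with the symmetry in $j,k$ reduces the task to controlling, for each $j$,
\[
I_{j}:=\int_{D_{j}}\!\int_{\Omega\setminus D_{j}}\frac{\dd y\,\dd x}{|x-y|^{3+2s'}},
\qquad\text{so that}\qquad
[c^{-2}]_{H^{s'}(\Omega)}^{2}\le 4\sum_{j} c_{j}^{2}\,I_{j}.
\]

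\smallskip

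For the inner integral I would enlarge $\Omega\setminus D_{j}$ to $\mathbb{R}^{3}\setminus B(x,d(x))$, where $d(x):=\mathrm{dist}(x,\partial D_{j})$, and compute in polar coordinates to obtain
\[
\int_{\Omega\setminus D_{j}}\frac{\dd y}{|x-y|^{3+2s'}}\;\le\;\frac{C}{s'}\,d(x)^{-2s'}.
\]
Thus $I_{j}\le \frac{C}{s'}\int_{D_{j}}d(x)^{-2s'}\,\dd x$. The heart of the argument is then to estimate this boundary-distance integral in terms of $r_{0}$, $L$ and $|D_{j}|$. The hypothesis that $\partial D_{j}$ is of Lipschitz class with constants $r_{0},L$ yields, via a standard covering of $\partial D_{j}$ by Lipschitz coordinate cylinders of scale $r_{0}$, the tubular-neighborhood bound
\[
|\{x\in D_{j}:d(x)<t\}|\;\le\; C(L)\,|\partial D_{j}|\,t, \qquad 0<t\le c\,r_{0},
\]
together with the uniform-interior thickness estimate $|\partial D_{j}|\,r_{0}\le C(L)\,|D_{j}|$. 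Plugging these into a layer-cake decomposition
\[
\int_{D_{j}}d(x)^{-2s'}\,\dd x
 = 2s'\!\int_{0}^{\infty}\!t^{-2s'-1}\,|\{x\in D_{j}:d(x)<t\}|\,\dd t,
\]
and splitting at $t=c\,r_{0}$ (using $2s'<1$ for integrability near $0$ and $|D_{j}|$ as a crude bound beyond $c\,r_{0}$), gives
\[
\int_{D_{j}}d(x)^{-2s'}\,\dd x\;\le\;C(L,s')\,r_{0}^{-2s'}\,|D_{j}|.
\]

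\smallskip

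Combining everything, $[c^{-2}]_{H^{s'}(\Omega)}^{2}\le C(L,s')\,r_{0}^{-2s'}\sum_{j}c_{j}^{2}|D_{j}|=C(L,s')\,r_{0}^{-2s'}\|c^{-2}\|_{L^{2}(\Omega)}^{2}$, and since $r_{0}<1$ the $\|c^{-2}\|_{L^{2}}^{2}$ term is absorbed into this, yielding \cref{Hsbound} after taking square roots.

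\smallskip

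The main obstacle I foresee is the passage $\int_{D_{j}}d(x)^{-2s'}\,\dd x\lesssim r_{0}^{-2s'}|D_{j}|$ with a constant depending only on $L$ and $s'$. Both geometric ingredients it relies on (the linear-in-$t$ tubular neighborhood estimate up to scale $r_{0}$, and the interior-thickness bound $|\partial D_{j}|r_{0}\lesssim |D_{j}|$) are standard consequences of the $(r_{0},L)$ Lipschitz character but must be set up carefully through local coordinate charts; the rest of the proof is essentially bookkeeping.
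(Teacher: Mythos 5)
Your proof is correct, and it follows the same two-step skeleton as the paper's: reduce the Gagliardo seminorm of $c^{-2}$ to a sum of contributions of the single characteristic functions $\chi_{D_j}$, then bound each contribution by $C(L,s')\,r_0^{-2s'}|D_j|$ using the $(r_0,L)$ Lipschitz character. The differences are in how each step is executed, and yours is arguably cleaner. For the reduction, the paper expands $\bigl|\sum_j c_j(\chi_{D_j}(x)-\chi_{D_j}(y))\bigr|^2$ and controls the mixed terms by a somewhat laborious algebraic estimate to arrive at \cref{Hsbound2}; you instead observe that for a piecewise constant function the diagonal blocks of the double integral vanish identically, so only the cross terms $\sum_{j\neq k}(c_j-c_k)^2\int_{D_j}\int_{D_k}|x-y|^{-3-2s'}$ survive, and $(c_j-c_k)^2\le 2(c_j^2+c_k^2)$ plus symmetry gives exactly the same bound $2\sum_j c_j^2[\chi_{D_j}]^2_{H^{s'}}$ (your $4\sum_j c_j^2 I_j$). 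For the per-piece bound, the paper cites the Magnanini--Papi estimate \cref{bound1}, $\|\chi_{D_j}\|^2_{H^{s'}}\lesssim |D_j|^{1-2s'}|\partial D_j|^{2s'}$, and then invokes $|\partial D_j|/|D_j|\le C(L)/r_0$; you prove the needed bound from scratch via $\int_{\Omega\setminus D_j}|x-y|^{-3-2s'}\,\dd y\lesssim s'^{-1}d(x)^{-2s'}$ and a layer-cake argument, splitting at $t\sim r_0$ and using the tubular-neighborhood bound $|\{d(x)<t\}|\lesssim_L |\partial D_j|\,t$ together with $|\partial D_j|\,r_0\lesssim_L|D_j|$. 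The geometric inputs you flag as the main obstacle are exactly the ones the paper also uses without proof (the second is precisely the paper's final inequality $|\partial D_j|/|D_j|\le C(L)/r_0$), so your argument is at the same level of rigor while being self-contained rather than relying on the cited extension of Magnanini--Papi to three dimensions. You also handle the full $H^{s'}$ norm (seminorm plus $L^2$ part, absorbed using $r_0<1$) more carefully than the paper, which silently identifies the norm with the seminorm.
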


\begin{proof}
The proof is based on the extension of a result of Magnanini and Papi in \cite{Magnanini1985} to the three dimensional setting.
In fact, following the argument in \cite{Magnanini1985}, one has that
 \begin{equation}\label{bound1}
\|\chi_{D_j}\|^2_{H^{s'}(\Omega)}\leq \frac{16\pi}{(1-2s')(2s')^{1+2s'}}|D_j|^{1-2s'}|\partial D_j|^{2s'} .
\end{equation}
We now use the fact that $\{D_j\}_{j=1}^N$  is a partition of disjoint sets of $\Omega$ to show the following inequality
\begin{equation}\label{Hsbound2}
\|c^{-2}\|^2_{H^{s'}(\Omega)}\leq 2\sum_{j=1}^Nc_j^2\|\chi_{D_j}\|^2_{H^{s'}(\Omega)}
\end{equation}
In fact, in order to prove \cref{Hsbound2} recall that
\[
\|c^{-2}\|^2_{H^{s'}(\Omega)}=\int_{\Omega}\int_{\Omega}\frac{|\sum_{j=1}^Nc_j(\chi_{D_j}(x)-\chi_{D_j}(y))|^2}{|x-y|^{3+2s'}}\,dx\,dy
\]
and observe that, since the $\{D_j\}_{j=1}^N$  is a partition of disjoint sets of $\Omega$, we get
\[
|\sum_{j=1}^Nc_j(\chi_{D_j}(x)-\chi_{D_j}(y))|^2=\sum_{j=1}^Nc^2_j(\chi_{D_j}(x)-\chi_{D_j}(y))^2-\sum_{i\neq j}c_ic_j\chi_{D_i}(x)\chi_{D_j}(y)
\]
Again, by the fact that  the $\{D_j\}_{j=1}^N$  are disjoint sets, we have
\begin{eqnarray*}
& & \sum_{i\neq j}|c_ic_j|\chi_{D_i}(x)\chi_{D_j}(y)\leq \sum_{i\neq j}\frac{c_i^2+c_j^2}{2}\chi_{D_i}(x)\chi_{D_j}(y) \\
& = & \sum_{i\neq j}\frac{c_i^2}{2}(\chi_{D_i}(x)-\chi_{D_i}(y))^2\chi_{D_i}(x)\chi_{D_j}(y)+ \sum_{i\neq j}\frac{c_j^2}{2}(\chi_{D_j}(x)-\chi_{D_j}(y))^2\chi_{D_i}(x)\chi_{D_j}(y)\\
& \leq &\sum_{i\neq j}\frac{c_i^2}{2}(\chi_{D_i}(x)-\chi_{D_i}(y))^2\chi_{D_j}(y)+ \sum_{i\neq j}\frac{c_j^2}{2}(\chi_{D_j}(x)-\chi_{D_j}(y))^2\chi_{D_i}(x)\\
& \leq & \sum_{i=1}^N\frac{c_i^2}{2}(\chi_{D_i}(x)-\chi_{D_i}(y))^2\sum_{j=1}^N\chi_{D_j}(y)+\sum_{j=1}^N\frac{c_j^2}{2}(\chi_{D_j}(x)-\chi_{D_j}(y))^2\sum_{i=1}^N\chi_{D_i}(y)\\
& \leq & \sum_{i=1}^N\frac{c_i^2}{2}(\chi_{D_i}(x)-\chi_{D_i}(y))^2+\sum_{j=1}^N\frac{c_j^2}{2}(\chi_{D_j}(x)-\chi_{D_j}(y))^2 \\
& = & \sum_{i=1}^Nc_i^2(\chi_{D_i}(x)-\chi_{D_i}(y))^2
\end{eqnarray*}
where we have used the fact that $\sum_{i=1}^N\chi_{D_i}\leq 1$. 
So, we have derived that
\[
|\sum_{j=1}^Nc_j(\chi_{D_j}(x)-\chi_{D_j}(y))|^2\leq 2 \sum_{j=1}^Nc^2_j(\chi_{D_j}(x)-\chi_{D_j}(y))^2
\]
from which it follows that 
\begin{eqnarray*}
\|c^{-2}\|^2_{H^{s'}(\Omega)} & = & \int_{\Omega}\int_{\Omega}\frac{|\sum_{j=1}^Nc_j(\chi_{D_j}(x)-\chi_{D_j}(y))|^2}{|x-y|^{3+2s'}}\,dx\,dy \\
& \leq & 2\int_{\Omega}\int_{\Omega}\frac{\sum_{j=1}^Nc_j^2(\chi_{D_j}(x)-\chi_{D_j}(y))^2}{|x-y|^{3+2s'}}\,dx\,dy  \\
& \leq & 2\sum_{j=1}^Nc_j^2\int_{\Omega}\int_{\Omega}\frac{(\chi_{D_j}(x)-\chi_{D_j}(y))^2}{|x-y|^{3+2s'}}\,dx\,dy
= 2\sum_{j=1}^Nc_j^2\|\chi_{D_j}\|^2_{H^{s'}(\Omega)}
\end{eqnarray*}
which proves \cref{Hsbound2}.
so that finally from \cref{Hsbound2,bound1,assumption_domain} we get
\[
\|c^{-2}\|^2_{H^{s'}(\Omega)}\leq 2\sum_{j=1}^Nc_j^2\|\chi_{D_j}\|^2_{H^{s'}(\Omega)}\leq C(s')\sum_{j=1}^Nc_j^2|D_j|\left(\frac{|\partial D_j|}{|D_j|}\right)^{2s'}\leq \frac{C(L, s')}{r_0^{2s'}}\|c^{-2}\|^2_{L^{2}(\Omega)} .
\]
\end{proof}

We are now ready to state and prove our main stability result

\medskip\medskip

\begin{proposition}\label{prop:Stab}
Assume  \cref{assumption_domain} and let $c^{-1}_1 ,c^{-1}_2\in \mathcal{W}_N$  and let
 $\omega^2$ satisfy \cref{wellpos}. Then, there exists a positive constant  $K$, depending on $A, r_1, L$, such that,
\begin{equation}\label{Lip_stab-1}
   \| c_1^{-2} - c_2^{-2}\|_{L^{2}(\Omega)} \le
       \frac{1}{\omega^2} \, e^{K(1+\omega^2B_2) (|\Omega|/r^3_0)^{{\frac{4}{7}}}} \, \,
   \| \Lambda_{\omega^2 c_1^{-2}}
        - \Lambda_{\omega^2 c_2^{-2}}
      \|_{\mathcal{L}(H^{1/2}(\partial\Omega),
                              H^{-1/2}(\partial\Omega))} .
\end{equation}
\end{proposition}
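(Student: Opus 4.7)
\medskip

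The plan is to combine Alessandrini's identity with complex geometrical optics (CGO) solutions to produce a pointwise estimate on the Fourier transform of $f := c_1^{-2}-c_2^{-2}$, and then to split the $L^2$ norm of $f$ into a low-frequency piece controlled by CGO and a high-frequency piece tamed by the Sobolev-type bound of Lemma \ref{propHs'}.

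First I would write Alessandrini's identity
\[
   \langle (\Lambda_{\omega^2c_1^{-2}}-\Lambda_{\omega^2c_2^{-2}})g,h\rangle
   = \omega^2\int_\Omega f\,u_1 v_2\,\dd x,
\]
for $u_1$ solving Helmholtz with $c_1^{-2}$ and boundary data $g$, and $v_2$ solving with $c_2^{-2}$ and boundary data $h$ (the well-posedness is guaranteed by Assumption \ref{wellpos} together with Proposition \ref{uniformconstants}). Then, for a fixed $\xi\in\R^3$ and a parameter $\rho>0$ with $|\xi|\le 2\rho$, I would pick $\zeta_1,\zeta_2\in\C^3$ satisfying $\zeta_j\cdot\zeta_j=0$, $\zeta_1+\zeta_2=-\xi$ and $|\mathrm{Im}\,\zeta_j|=\rho$, and construct CGO-type solutions $u_j(x)=e^{\ii\zeta_j\cdot x}(1+\psi_j(x))$ of the two Helmholtz equations, with the standard Sylvester--Uhlmann remainder bound $\|\psi_j\|_{L^2(\Omega)}\le C\omega^2 B_2/\rho$ (valid once $\rho\gtrsim \omega^2 B_2$).

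Substituting $u_1 v_2=e^{-\ii\xi\cdot x}(1+\psi_1+\psi_2+\psi_1\psi_2)$ into the identity, and bounding $\|u_j\|_{H^{1/2}(\partial\Omega)}\lesssim \rho\,e^{c\rho A r_1}$ from the exponential factor and the remainder estimate, I obtain a pointwise Fourier bound
\[
  |\hat f(\xi)|\;\le\;\frac{C\,\rho^{2}\,e^{c\rho A r_1}}{\omega^2}\,E
  \;+\;\frac{C\,\omega^{2} B_2}{\rho}\,\|f\|_{L^2(\Omega)},
  \qquad E:=\|\Lambda_{\omega^2c_1^{-2}}-\Lambda_{\omega^2c_2^{-2}}\|,
\]
valid on $\{|\xi|\le 2\rho\}$. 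Next I split $\|f\|_{L^2}^2$ via Plancherel into $|\xi|\le R$ and $|\xi|>R$ for some $R\le 2\rho$. On the low-frequency ball I square-integrate the above bound (picking up a factor $R^{3}$); on the high-frequency part I use Lemma \ref{propHs'}, which yields
\[
   \int_{|\xi|>R}|\hat f(\xi)|^{2}\,\dd\xi
   \;\le\; R^{-2s'}\|f\|_{H^{s'}(\Omega)}^{2}
   \;\le\; \frac{C(L,s')}{(R r_0)^{2s'}}\,\|f\|_{L^{2}(\Omega)}^{2},
\]
for any $s'\in(0,1/2)$.

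Combining the two pieces yields an inequality of the form
\[
  \|f\|_{L^{2}}^{2}\;\le\; C R^{3}\rho^{4}\frac{e^{2c\rho A r_1}}{\omega^{4}}E^{2}
    \;+\;\Bigl(\tfrac{C R^{3}\omega^{4}B_2^{2}}{\rho^{2}}
    +\tfrac{C(L,s')}{(Rr_0)^{2s'}}\Bigr)\|f\|_{L^{2}}^{2}.
\]
I then choose $R$ as a fixed multiple of $1/r_0$ (to beat the high-frequency coefficient, with $s'$ close to $1/2$) and $\rho$ just large enough that $R^{3}\omega^{4}B_2^{2}/\rho^{2}\le 1/4$, i.e.\ $\rho\sim (1+\omega^{2}B_2)(|\Omega|/r_0^{3})^{\alpha}$. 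The two coefficient balances together pin down the exponent $\alpha=4/7$, absorb the $\|f\|_{L^{2}}^{2}$ terms into the left-hand side, and leave
\[
   \|f\|_{L^{2}}^{2}\;\le\;\frac{1}{\omega^{4}}\,e^{2K(1+\omega^{2}B_2)(|\Omega|/r_0^{3})^{4/7}}\,E^{2},
\]
from which the claimed estimate \cref{Lip_stab-1} follows by taking a square root.

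The main obstacles are (i) producing the CGO solutions with a remainder estimate whose dependence on $\omega^{2}B_2$ and $\rho$ is explicit (needed to track constants in the exponent), and (ii) making the balancing of the three exponents $R,\rho,s'$ tight enough to yield the advertised exponent $4/7$ rather than some weaker power; the interplay between Lemma \ref{propHs'} (which prescribes how high frequencies scale with $r_0$) and the CGO growth $e^{c\rho A r_1}$ is what forces the specific optimum.
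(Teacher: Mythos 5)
Your proposal runs on the same core machinery as the paper's proof---Alessandrini's identity tested against H\"ahner/Salo-type CGO solutions with $\zeta_1+\zeta_2=\pm\xi$, a low/high-frequency splitting of $\widehat{f}$ for $f:=c_1^{-2}-c_2^{-2}$, and \cref{propHs'} to control the high-frequency tail by $\|f\|_{L^2}$---but the endgame is genuinely different. The paper ties the Fourier radius $\rho$ to the CGO parameter $s$ so as to equalize the two error terms, then chooses $s$ proportional to $|\log(\|\Lambda_1-\Lambda_2\|/\|\omega^2 f\|_{L^2})|$, obtains the logarithmic estimate \cref{logstab}, and upgrades it to Lipschitz through the self-improving inequality \cref{logstab1} together with the trivial branch \cref{lipstab}; the exponent $4/7$ is just $\tfrac{9+10s'}{6(3+2s')}$ at $s'=1/4$ rounded up. You instead absorb both error terms into the left-hand side with data-independent choices $R\sim M/r_0$ and $\rho\sim(1+\omega^2B_2)r_0^{-3/2}$; this absorption is legitimate precisely because \cref{propHs'} bounds the tail by a multiple of $\|f\|_{L^2}^2$ itself (this is where the conditional, finite-dimensional hypothesis enters), and it is arguably cleaner since it bypasses the logarithmic detour. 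Two corrections are in order. First, your balance does not ``pin down $4/7$'': it yields $\rho\sim r_0^{-3/2}$, i.e.\ exponent $1/2$ in $|\Omega|/r_0^3$, which is \emph{sharper} than $4/7$ and implies \cref{Lip_stab-1} a fortiori once one assumes, as the paper does, $|\Omega|/r_0^3>1$ (you also need $\rho\gtrsim R$ and $\rho$ above the CGO threshold, handled by taking a maximum). Second, the quadratic remainder $\psi_1\psi_2$, which you write down and then silently drop, only decays like $(\omega^2B_2)^2\rho^{-1/2}$ after $L^4$-interpolation, so it is not dominated by the linear terms for large $\rho$; in the absorption scheme this inflates the required $\rho$ and hence the constants. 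The paper's own passage to \cref{bound2} elides exactly the same point, so this is not a defect specific to your route, but it is the one place where the absorption argument is more sensitive than the logarithmic one and would need to be addressed to keep the advertised dependence on $\omega^2 B_2$.
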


\begin{proof}
To prove our stability estimate we follow the idea of
Alessandrini of using CGO solutions but we use slightly different ones
than those introduced in \cite{Sylvester1987} and in
\cite{Alessandrini1988} to obtain better constants in the stability
estimates as proposed by \cite{Salo}.  We also use the estimates
proposed in \cite{Salo} (see Theorem 4.4) and due to \cite{Haehner1996}
concerning the case of bounded potentials.

In fact, by Theorem 4.3 of \cite{Salo}, since $c^{-2}\in
L^{\infty}(\Omega)$, $\|c^{-2}\|_{L^{\infty}(\Omega)}\leq B_2$,
there exists a positive constant $C=C(\omega^2, B_2,A,r_1)$ such that for every 
$\zeta\in\mathbb{C}^3$ satisfying $\zeta\cdot\zeta=0$ and $|\zeta|\geq
C$ the equation
\[
-\Delta u-\omega^2 c^{-2}u=0
\]
has a solution of the form
\[
u(x)=e^{ix\cdot\zeta}(1+R(x))
\]
where $R\in H^1(\Omega)$ satisfies
\[
\|R\|_{L^2(\Omega)}\leq \frac{C}{|\zeta|},\quad 
\|\nabla R\|_{L^2(\Omega)}\leq C.
\]

Let $\xi\in \mathbb{R}^3$ and let $\tilde\omega_1$ and $\tilde\omega_2$ be unit vectors of $\mathbb{R}^3$ such that $\{\tilde\omega_1,\tilde\omega_2,\xi\}$ is an orthogonal set of vectors of $\mathbb{R}^3$ . Let $s$ be a positive parameter to be chosen later and set for $k=1,2$,
\begin{equation}
\zeta_k=\left\{\begin{array}{ccc}
                  (-1)^{k-1}\frac{s}{\sqrt 2}(\sqrt{(1-\frac{|\xi|^2}{2s^2})}\tilde\omega_1+(-1)^{k-1}\frac{1}{\sqrt{2}s}\xi+\ii\tilde\omega_2) &\mbox{for}& \frac{|\xi|}{\sqrt 2s}<1 , \\
                   (-1)^{k-1}\frac{s}{\sqrt 2}((-1)^{k-1}\frac{1}{\sqrt{2}s}\xi+\ii(\sqrt{(\frac{|\xi|^2}{2s^2}-1)}\tilde\omega_1+\tilde\omega_2))& \mbox{for}& \frac{|\xi|}{\sqrt 2s}\geq 1 .
                 \end{array}
\right.
\end{equation}
Then an straightforward computation 
gives
\[\zeta_k\cdot\zeta_k=0\]
for $k=1,2$ and
\[\zeta_1+\zeta_2=\xi.\]
Furthermore, for $k=1,2$,
 \begin{equation}
|\zeta_k|=\left\{\begin{array}{ccc}
             s &\mbox{for}& \frac{|\xi|}{\sqrt 2s}<1, \\
                  \frac{|\xi|}{\sqrt2}& \mbox{for}& \frac{|\xi|}{\sqrt 2s}\geq 1.
                 \end{array}
\right.
\end{equation}
Hence,
\begin{equation}\label{zeta}
|\zeta_k|=\max\{s,\frac{|\xi|}{\sqrt 2}\}.
\end{equation}
Then, by Theorem 4.3 of \cite{Salo}, for $|\zeta_1|,|\zeta_2|\geq C_1=\max\{C_0\omega^2B_2,1\}$, with $C_0=C_0(A,r_1)$, there exist $u_1,u_2$,  solutions to $-\Delta u_k- \omega^2 c_k^{-2} u_k=0$  for $k=1,2$ respectively, of the form
\begin{equation}\label{cgosol}
u_1(x)=e^{\ii x\cdot\zeta_1}(1+R_1(x)),\quad u_2(x)=e^{\ii x\cdot\zeta_2}(1+R_2(x))
\end{equation}
 with
\begin{equation}\label{remindercgo}
\|R_k\|_{L^2(\Omega)}\leq \frac{C_0\sqrt{|\Omega|}}{s}\omega^2B_2
\end{equation}
and
\begin{equation}\label{remindergradcgo}
\|\nabla R_k\|_{L^2(\Omega)}\leq C_0\sqrt{|\Omega|}\omega^2B_2
\end{equation}
for $k=1,2$.
It is common in the literature to use estimates which contain $\sqrt{|\Omega|}$; Different estimates in terms of $|\Omega|$ are possible and just change the leading constant $C_0$.

Consider again Alessandrini's identity
\[
\int_{\Omega} \omega^2(c_1^{-2} - c_2^{-2})u_1 u_2 \dd x = \langle(\Lambda_1-\Lambda_2)u_1|_{\partial\Omega},u_2|_{\partial\Omega}\rangle,
\]
where  $u_k\in H^1(\Omega)$ is any solution of $-\Delta u_k-\omega^2 c^{-2}_k u_k=0$ and 
$\Lambda_k = \Lambda_{\omega^2c_k^{-2}}$ for $k=1,2$. Inserting the solutions \cref{cgosol}  in Alessandrini's identity we derive
\begin{equation}
\begin{aligned}
& \left|\int_{\Omega} \omega^2(c^{-2}_1-c^{-2}_2)e^{\ii\xi\cdot x}\dd x\right| 
\\
\leq & \|\Lambda_1-\Lambda_2\|\|u_1\|_{H^{1/2}(\partial\Omega)}\|u_2\|_{H^{1/2}(\partial\Omega)}
+\left|\int_{\Omega}\omega^2(c^{-2}_1-c^{-2}_2)e^{\ii\xi\cdot x}(R_1+R_2+R_1R_2)\dd x\right|
\\
\leq & \|\Lambda_1-\Lambda_2\|\|u_1\|_{H^{1}(\Omega)}\|u_2\|_{H^{1}(\Omega)}
+E(\|R_1\|_{L^2(\Omega)}+\|R_2\|_{L^2(\Omega)}+\|R_1\|_{L^4(\Omega)}\|R_2\|_{L^4(\Omega)}).
\end{aligned}  
\end{equation}
where  $E:=\|\omega^2(c^{-2}_1-c^{-2}_2)\|_{L^{2}(\Omega)}$.
By \cref{remindercgo,remindergradcgo,zeta} and since $\Omega\subset B_{2R}(0)$  we have
\[
\|u_k\|_{H^{1}(\Omega)}\leq C\sqrt{|\Omega|}(s+|\xi|) e^{Ar_1(s+|\xi|)},\quad k=1,2.
\]
Let $s\geq C_2$  so that $s+|\xi|\leq e^{Ar_1(s+|\xi|)}$. Then, for $s\geq C_3=\max(C_1,C_2)$, using \cref{remindercgo,remindergradcgo} and the standard interpolation inequality ($\|u\|_{L^4(\Omega)}\leq \|u\|_{L^6(\Omega)}^{3/4} \|u\|_{L^2(\Omega)}^{1/4}$) we get
\begin{equation}\label{bound2}
   |\omega^2(c^{-2}_1-c^{-2}_2)\,\hat{ }\,(\xi)| \leq
   C\sqrt{|\Omega|} \left( e^{4Ar_1(s+|\xi|)}\|\Lambda_1-\Lambda_2\|+\frac{\omega^2B_2E}{s} \right)
\end{equation}
where the $\omega^2 c^{-2}_k$'s  have been extended to all $\mathbb{R}^3$ by zero and $\hat{ }$ denotes the Fourier transform. 
Hence, from \cref{bound2}, we get
\[
\int_{|\xi|\leq\rho} |\omega^2(c^{-2}_1-c^{-2}_2)\,\hat{ }\,(\xi)|^2\dd\xi \leq  C |\Omega|\rho^3\left(e^{8Ar_1(s+\rho)}\|\Lambda_1-\Lambda_2\|^2+\frac{\omega^4B^2_2E^2}{s^2}\right)
\]
and hence 
\begin{equation}\label{bound3}
\begin{aligned}
\|\omega^2 (c^{-2}_1-c^{-2}_2)\,\hat{ } \,\|^2_{L^2(\mathbb{R}^3)} 
\leq  C |\Omega|\rho^3 \bigg(e^{8Ar_1(s+\rho)}\|\Lambda_1-\Lambda_2\|^2 
     &+ \frac{\omega^4B^2_2E^2}{s^2}\bigg) \\
     &+ \int_{|\xi|\geq \rho}|\omega^2 (c^{-2}_1-c^{-2}_2)\,\hat{ }\,(\xi)|^2\,\dd\xi
\end{aligned}
\end{equation}
where $C=C(A,r_1)$.
By \cref{Hsbound,Hsbound2} we have that
\[
\|\omega^2(c^{-2}_1-c^{-2}_2)\|^2_{H^{s'}(\Omega)} \leq \frac{C}{r_0^{2s'}}E^2,
\]
where $C$ depends on $L, s'$ and hence
\begin{eqnarray*}
\rho^{2s'}\int_{|\xi|\geq \rho}|\omega^2 (c^{-2}_1-c^{-2}_2)\,\hat{ }\,(\xi)|^2\,\dd\xi&\leq &\int_{|\xi|\geq \rho}|\xi|^{2s'}|\omega^2 (c^{-2}_1-c^{-2}_2)\,\hat{ }\,(\xi)|^2\,\dd\xi\\
&\leq &\int_{\mathbb{R}^3}(1+|\xi|^{2})^{s'}|\omega^2 (c^{-2}_1-c^{-2}_2)\,\hat{ }\,(\xi)|^2\,\dd\xi\leq \frac{C}{r_0^{2s'}}E^2.
\end{eqnarray*}
Hence,  we get
\[
\int_{|\xi|\geq \rho}|\omega^2 (c^{-2}_1-c^{-2}_2)\,\hat{ }\,(\xi)|^2\,\dd\xi\leq \frac{CE^2}{r^{2s'}_0\rho^{2s'}}
\]
for every $s'\in (0,1/2)$. Inserting last bound in \cref{bound3}  we derive
\[
\|\omega^2 (c^{-2}_1-c^{-2}_2)\,\hat{ } \,\|^2_{L^2(\mathbb{R}^3)}\leq C\left(\rho^3|\Omega|e^{8Ar_1(s+\rho)}\|\Lambda_1-\Lambda_2\|^2+\rho^3|\Omega| \frac{\omega^4B^2_2E^2}{s^2}+\frac{E^2}{r_0^{2s'}\rho^{2s'}}\right).
\]
where $C=C(L,s')$. To make the last two terms in the right-hand side of the inequality of equal size we pick up
\[\sqrt[3]{|\Omega|}\rho=\left(\frac{|\Omega|}{r_0^3}\right)^{\frac{2s'}{3(3+2s')}}\left(\frac{1}{\alpha}\right)^{\frac{1}{3+2s'}}s^{\frac{2}{3+2s'}}
\]
with $\alpha=\max \{1,\omega^4B_2^2\}$.  Then, by \cref{assumption_domain} and observing that we might assume without loss of 
generality that $\frac{|\Omega|}{r_0^3}>1$. In fact, if this is not the case we can choose a smaller value of $r_0$ so that the condition is satisfied.
\begin{equation*}
\|\omega^2 (c^{-2}_1-c^{-2}_2)\|^2_{L^{2}(\Omega)}\leq
CE^2\left(\frac{|\Omega|}{r_0^3}\right)^{\frac{2s'}{3+2s'}}\left(e^{C_4(\frac{|\Omega|}{r^3_0})^{\frac{2s'}{3(3+2s')}}s}\left(\frac{\|\Lambda_1-\Lambda_2\|}{E}\right)^2+\left(\frac{\alpha}{s^2}\right)^{\frac{2s'}{3+2s'}}\right)
\end{equation*}
for $s\geq C_3$ and where $C $ depends on $s', L, A,r_1$ and $C_4$ depends on $L,A,r_1$. We now make the substitution
\[
s=\frac{1}{C_4(\frac{|\Omega|}{r^3_0})^{\frac{2s'}{3(3+2s')}}}\left|\log \frac{\|\Lambda_1-\Lambda_2\|}{E}\right|
\]
where we assume that
\[
\frac{\|\Lambda_1-\Lambda_2\|}{E} < c:=e^{-\bar C\max\{1,\omega^2B_2\}(\frac{|\Omega|}{r^3_0})^{\frac{2s'}{3(3+2s')}}}
\]
with $\bar C=\bar C(R)$ in order that  the constraint $s\geq C_3$ is satisfied.  Under this assumption,
\begin{equation}\label{logstab}
\|\omega^2 (c^{-2}_1-c^{-2}_2)\|_{L^{2}(\Omega)}\leq C(\sqrt\alpha)^{\frac{2s'}{3+2s'}}\left(\frac{|\Omega|}{r^3_0}\right)^{\frac{2s'}{3+2s'}\frac{9+10s'}{6(3+2s')}}E\left(\left|\log \frac{\|\Lambda_1-\Lambda_2\|}{E}\right|^{-\frac{2s'}{3+2s'}}\right)
\end{equation}
 where $C=C( L,s',A,r_1)$ and we can rewrite last inequality in the form
\begin{equation}\label{logstab1}
E\leq C(1+\omega^2B_2)^{\frac{2s'}{3+2s'}}\left(\frac{|\Omega|}{r^3_0}\right)^{\frac{2s'}{3+2s'}\frac{9+10s'}{6(3+2s')}}E\left(\left|\log \frac{\|\Lambda_1-\Lambda_2\|}{E}\right|^{-\frac{2s'}{3+2s'}}\right)
\end{equation}
which gives
\begin{equation}\label{lip1}
E\leq e^{C(1+\omega^2B_2)(\frac{|\Omega|}{r^3_0})^{\frac{9+10s'}{6(3+2s')}}}\|\Lambda_1-\Lambda_2\|
\end{equation}
where $C=C( L,s',A,r_1)$.
On the other hand if
\[
\frac{\|\Lambda_1-\Lambda_2\|}{E}\geq c,
\]
then
\begin{equation}\label{lipstab}
\|\omega^2 (c^{-2}_1-c^{-2}_2)\|_{L^{2}(\Omega)}\leq c^{-1}\|\Lambda_1-\Lambda_2\|\leq e^{\bar C(1+\omega^2B_2)\left(\frac{|\Omega|}{r^3_0}\right)^{\frac{1}{3(3+2s')}}}\|\Lambda_1-\Lambda_2\|
\end{equation}
Hence, from \cref{lip1,lipstab} and recalling that $s'\in (0,\frac{1}{2})$, we have that
\begin{equation}\label{lip2}
E\leq e^{C(1+\omega^2B_2)(\frac{|\Omega|}{r^3_0})^{\frac{9+10s'}{6(3+2s')}}}\|\Lambda_1-\Lambda_2\|
\end{equation}
Choosing $s'=\frac{1}{4}$, we derive 
\[
\|c^{-2}_1-c^{-2}_2\|_{L^{2}(\Omega)}\leq \frac{1}{\omega^2}e^{K(1+\omega^2B_2)(|\Omega|/r^3_0)^{\frac{4}{7}}}\|\Lambda_1-\Lambda_2\|
\]
where $K=K(L,A,r_1,s')$ and the claim follows. \qquad
\end{proof}

\medskip\medskip

\begin{remark}
\label{re:stability_n}
Here we state an $L^\infty$-stability estimate, in contrast to the $L^2$-stability estimate in \cref{prop:Stab}.

Observing that
\[
\frac{1}{\sqrt{|\Omega|}}\|c^{-2}_1-c^{-2}_2\|_{L^{2}(\Omega)}\leq \|c^{-2}_1-c^{-2}_2\|_{L^{\infty}(\Omega)}\leq \frac{C}{r_0^{3/2}}\|c^{-2}_1-c^{-2}_2\|_{L^{2}(\Omega)},
\]
where $C=C(L)$, and we immediately get the following stability estimate in the
$L^{\infty}$ norm
\[
\|c^{-2}_1-c^{-2}_2\|_{L^{\infty}(\Omega)}\leq \frac{C}{\omega^2}e^{K(1+\omega^2B_2)(|\Omega|/r^3_0)^{\frac{4}{7}}}\|\Lambda_1-\Lambda_2\|
\]
with $C=C(L)$.
\end{remark}

\begin{remark} \label{rem:cubpart}
In \cite{Beretta2012} the following lower bound of the stability constant has been obtained in the case of a uniform polyhedral partition $\mathcal{D}_N$
\begin{equation}\label{eq:SClb}
C_N\geq \frac{1}{4\omega^2}e^{K_1N^{\frac{1}{5}}}
\end{equation}
Choose a uniform cubical partition $\mathcal{D}_N$ of $\Omega$ of mesh size $r_0$. Then,
\begin{equation} \label{eq:omega_link_N}
|\Omega|=Nr_0^3
\end{equation}
and estimate \cref{Lip_stab-1} of \cref{prop:Stab} gives
\begin{equation}\label{eq:SCub}
C_N= \frac{1}{\omega^2}e^{K(1+\omega^2B_2)N^{\frac{4}{7}}},
\end{equation}
which proves  a sharp bound on the Lipschitz constant with respect to $N$ when the global DtN map is known.
In \cite{Beretta2012} a Lipschitz stability estimate has been derived
in terms of the local DtN map using singular solutions.
This type of solutions allows to recover the unknown piecewise
constant wavespeeds by determining it on the outer boundary of the
domain and then, by propagating the singularity inside the domain, to
recover step by step the wavespeed on the interface of all subdomains
of the partition. This iterative procedure does not lead to sharp
bounds of the Lipschitz constant appearing in the stability
estimate. It would be interesting if one can get a better bound of the
Lipschitz constant using oscillating solutions.
\end{remark}

\medskip\medskip

\begin{remark} \label{remark:Frechetlowerbound}
In \cref{Frechet-diff} we have seen that $F_{\omega}$ is Fr\'{e}chet differentiable with Lipschitz derivative $DF_{\omega}$ for which we have derived an upper bound in terms of the apriori data. From the stability estimates we can easily derive the following lower bound 
\begin{equation}\label{Frechetlowerbound}
   \min_{c^{-2} \in \mathcal{W}_N ;\ h \in \R^N ,\ \| h \|_{L^{\infty}(\Omega)} = 1}
           \| DF_{\omega}[c^{-2}]h \|_*\geq \omega^2e^{-K(1+\omega^2B_2)(\frac{|\Omega|}{r_0^3})^{4/7}}.
\end{equation}
where $K=K(L,A,r_1)$ and $\|\cdot\|_*$ indicates the norm in $\mathcal{L}(H^{1/2}(\partial\Omega), H^{-1/2}(\partial\Omega))$ i.e.  \[
\|T\|_*=\sup\{
\langle Tg,f \rangle |
: g,f\in H^{1/2}(\partial\Omega), \|g\|_{H^{1/2}(\partial\Omega)}=\|f\|_{H^{1/2}(\partial\Omega)}=1\}\]  
In fact, by the injectivity of $DF_{\omega}$
\[
  \min_{c^{-2} \in \mathcal{W}_N ;\ h \in \R^N ,\ \| h \|_{L^{\infty}(\Omega)} = 1}
           \| DF_{\omega}[c^{-2}]h\|_*=m_0/2>0
           \]
Then, there exists $h_0$ satisfying $ \| h_0 \|_{L^{\infty}(\Omega)} = 1$ and $c_0^{-2}\in \mathcal{W}_N$ such that 
\[
 \| DF_{\omega}[c_0^{-2}] h_0 \|_*\leq m_0\;.
\]
Hence,  by the definition of $\|\cdot\|_*$  it follows that 
\[
\left| \langle DF_{\omega}[c_0^{-2}](h_0)g,f \rangle \right|=\left|\int_{\Omega}h_0\tilde{u}_0v_0\right|\leq m_0\|\tilde{u}_0\|_{H^{1/2}(\partial\Omega)}\|v_0\|_{H^{1/2}(\partial\Omega)}
\]
where $\tilde{u}_0$ and $v_0$ are solutions to the equation $(-\Delta-\omega^2c_0^{-2})u=0$ in $\Omega$ with boundary data $g$ and $f$, respectively.  Proceeding like in the proof of the stability result \cref{prop:Stab,re:stability_n} we derive that
\[
1=\|h_0^{-2}\|_{L^{\infty}(\Omega)}\leq \frac{1}{\omega^2} e^{K(1+\omega^2B_2)(\frac{|\Omega|}{r_0^3})^{4/7}}m_0
\]
which gives the lower bound \cref{Frechetlowerbound}.
\end{remark}

\section{Computational experiments}

In this section, we numerically compute the stability constant 
for the inverse problem associated with the 
Dirichlet-to-Neumann map. We illustrate the stability behaviour 
and compare it with the analytical bounds derived in 
\cref{sec:ibvp_dtn}.
The estimates we provide here are obtained from the definition 
of the stability constant, 
\begin{equation} \label{eq:stability_direct_method}
 \Vert c_1^{-2} - c_2^{-2} \Vert^2 < 
 \mathcal{C} \Vert F_{\omega}(c_1^{-2}) - F_{\omega}(c_2^{-2}) \Vert^2,
\end{equation}
where $\Vert c_1^{-2} - c_2^{-2} \Vert$ denotes the $L^2$-norm 
of the functions from the finite dimensional Ansatz space. 
In particular we consider here a geophysical example of reconstruction
where normal data are collected on the boundary. In this situation 
$c_1$ and $c_2$ are assimilated to two different wavespeeds.
Hence the boundary value problem~\cref{Helmholtz} 
corresponds to the propagation of acoustic wave in the media 
for a boundary source $g$ using the wavespeeds $c_1$ and $c_2$
respectively. In our experiments, Gaussian
shaped (spatial) source functions (see \cref{fig:aq:src})
are applied. Then the normal data (measurements of the normal derivative
of the field) are acquired on the boundary 
in order to generate the forward operator. The numerical
stability estimates are finally obtained by the knowledge of all
quantities of equation~\cref{eq:stability_direct_method}.

In the \cref{rem:cubpart}, we have formulated the stability
constant depending on the number of cubical partitions $N$ in the 
model representation equation~\cref{eq:omega_link_N}. This 
situation is well adapted for numerical applications where the domain 
is commonly discretized. Hence we want to verify the (exponential) 
dependence of the stability constant with $N$.

\begin{figure}[h!] \centering
   \includegraphics[]{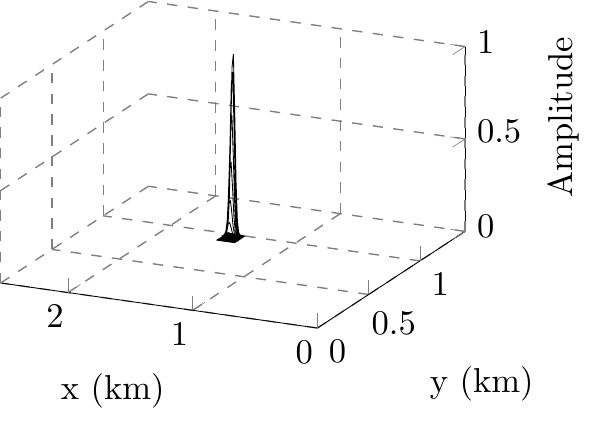}
\caption{Illustration of the source shape for a localized boundary source.}
\label{fig:aq:src}
\end{figure}

The model (assimilated to a wavespeed here)
is defined on a cubical (structured) domain 
partition of a rectangular block. 
With increasing $N$, the size of the cubes decreases, 
possibly non uniformly. We use 
piecewise constant functions on the cubes to define 
the wavespeeds following the main assumption for the 
Lipschitz stability to hold. Such a partition can be
related to Haar wavelets, where $N$ determines the scale. 
These naturally introduce
approximate representations, that is, when the
scale of the approximation is coarser than the finest scale contained
in the model.

In order to solve the forward problem, the numerical discretization 
of the operator is realized 
using discontinuous Galerkin method, 
where Dirichlet boundary conditions are invoked.
The Dirichlet sources at the top boundary introduce
Identity block in the discretized Helmholtz operator and give the
following linear problem
\begin{equation}
\left(\begin{matrix} 
        A_{i i} & A_{i \partial} \\
        A_{\partial i} & A_{\partial \partial} 
      \end{matrix}\right)
      \left(\begin{matrix} u_i \\ u_{\partial} \end{matrix}\right) =
\left(\begin{matrix} 
        A_{i i} & A_{i \partial} \\
             0  & Id
      \end{matrix}\right)
      \left(\begin{matrix} u_i \\ u_{\partial} \end{matrix}\right) =
      \left(\begin{matrix} 0   \\ g            \end{matrix}\right),
\end{equation} 
where $A$ represents the discretized operator, $i$ labels interior 
points and $\partial$ labels boundary points,
$g$ has values at the source location and is zero elsewhere.
This system verifies $u_\partial = u_{\vert \partial \Omega} = g$ 
(i.e. Dirichlet boundary condition) and 
$A_{i i} u_i + A_{i \partial} u_{\vert \partial \Omega} = 0$.
The normal derivative data are generated
by taking the normal derivative of the solution wavefield $u$ 
on the surface. \\

\begin{figure}[h!] \centering
    \subfigure[Partition using $N=2,880$ domains.]
               {\includegraphics[]{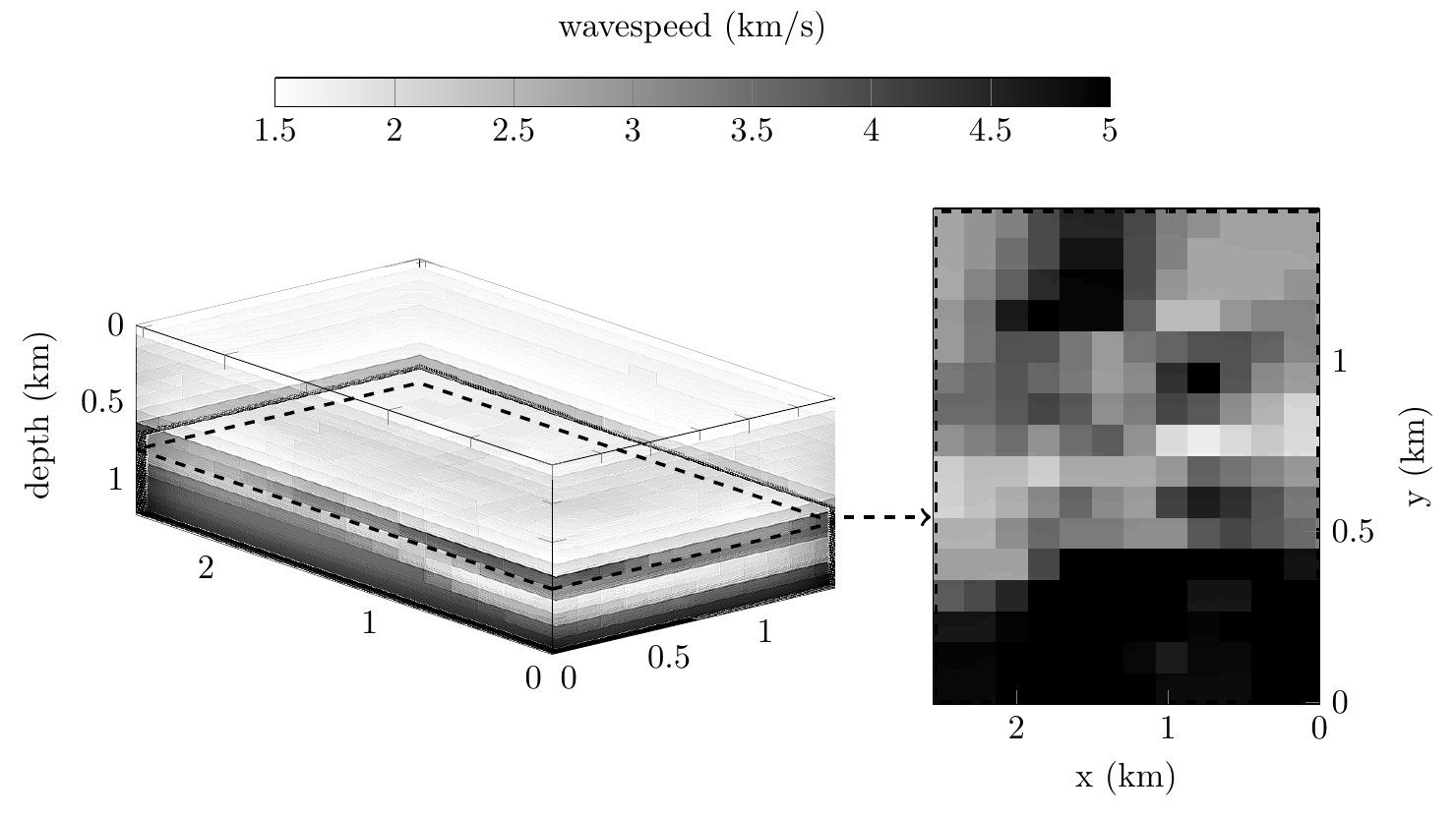}}\\
    \subfigure[Partition using $N=1,527,168$ domains.]
               {\includegraphics[]{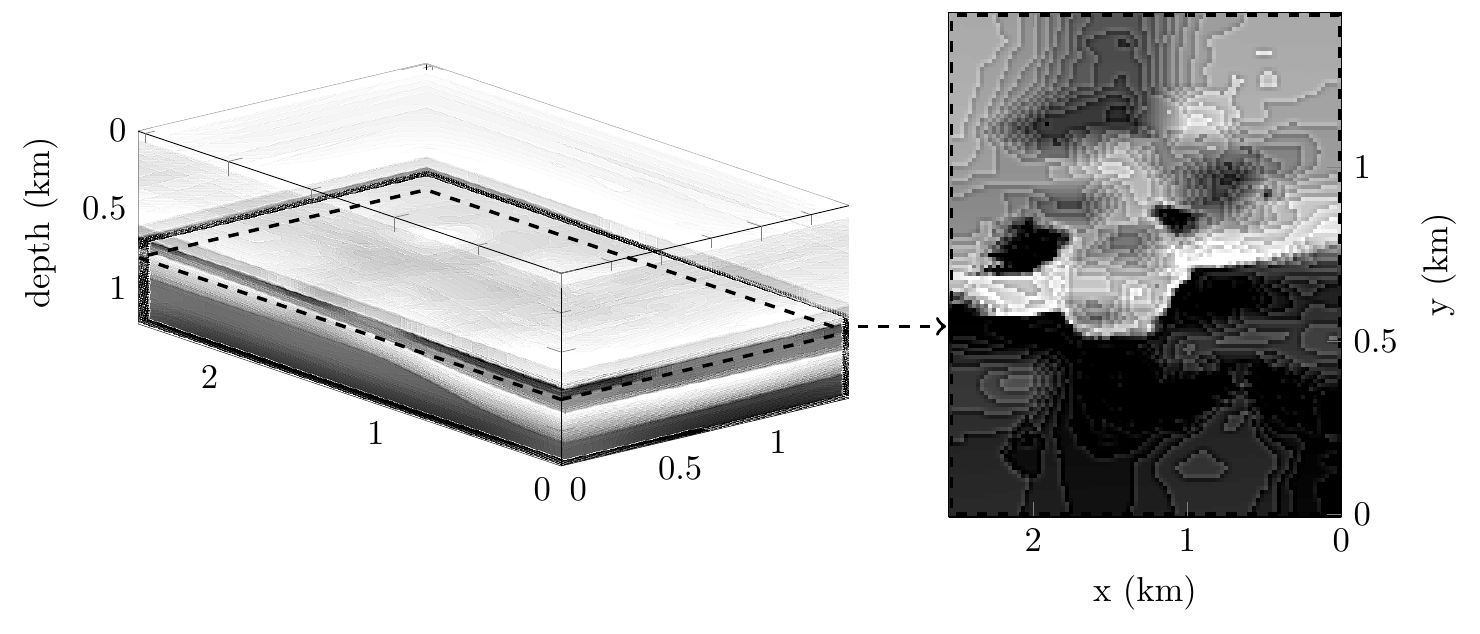}}
    \caption{Three dimensional representations 
             and horizontal sections at $800$m depth
             of the reference wavespeed ($c_1$) using different 
             partition, i.e. scales. Every scale has a 
             structured (rectangular) decomposition using 
             piecewise constant. The size of the rectangular 
             boxes defines the scale of the wavespeed.}
\label{fig:model_compression}
\end{figure}

Our experiments use a three dimensional model of size 
$2.55\times 1.45\times1.22$km. The wavesepeed $c_1$ is
viewed as a reference model (which is known in this test case)
and is represented \cref{fig:model_compression} (courtesy Statoil).
We also illustrate the different
partitions of a model and the notion of approximation. Obviously the larger
the number of subdomains is, the more precise will the representation be.

For the computation of the stability estimates we consider $c_2$ 
as the model shown in \cref{fig:model_for_stability}.
This setup can be associated
with the `true' subsurface \cref{fig:model_compression} and 
starting model \cref{fig:model_for_stability}. In this context
we have chosen the initial guess with no knowledge of any structures 
by simply considering a one dimensional variation in depth.

\begin{figure}[h!] \centering
  \includegraphics[]{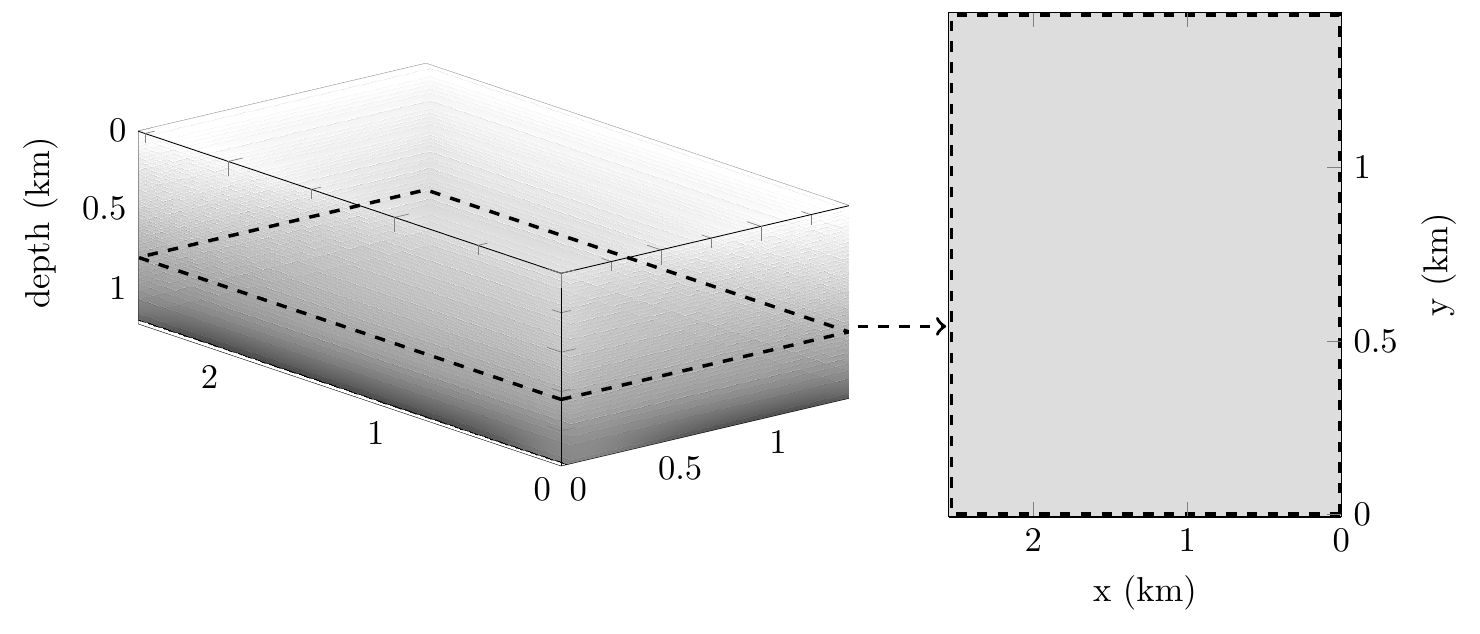}
  \caption{Three dimensional wavespeed used for the successive 
           estimation of the stability constant ($c_2$),
           $3$D representation (left) and horizontal sections 
           at $800$m depth (right).}
  \label{fig:model_for_stability}
\end{figure}

\subsection{Estimates using the full Dirichlet-to-Neumann map}
We consider the full data case where
the Gaussian sources (see \cref{fig:aq:src}) are positioned on 
each surface following a regular map. For each source, the 
data are acquired all over the boundary. We introduce a 
total of $630$ sources and $76 538$ data points for each.

At a selected partition (number of domains) and frequency, 
we simulate the data for the two media $c_1$ and $c_2$ and 
compute the difference, from which we deduce the 
stability constant following 
equation~\cref{eq:stability_direct_method}.The main 
difference with the standard seismic setup is that we consider 
data on all the boundary and not only at the top. This last case will 
be mentioned in the \cref{subsec:partialdata}.\\

The numerical estimates for the stability constant 
$\mathcal{C}$ should depend on the number of domains $N$ 
following the expression of the lower and upper 
bounds defined in the \cref{rem:cubpart},
equations~\cref{eq:SClb} and \cref{eq:SCub}. Thus
we fix the frequency and estimate the stability 
for different partitions. The evolution of the estimates 
and underlying bounds are presented in
\cref{fig:stability_depend_n_direct} at two selected frequencies,
$5$ and $10$Hz. We plot on a $\log \log$ scale the function
$\log(\mathcal{C} \omega^2)$ to focus on the power of $N$
in the estimates, which is the slope of the lines ($4/7$ for the
upper bound and $1/5$ for the lower bound).\\

\begin{figure}[h!] \centering
 \subfigure[Stability estimates at $5$Hz frequency]
           {\includegraphics[]{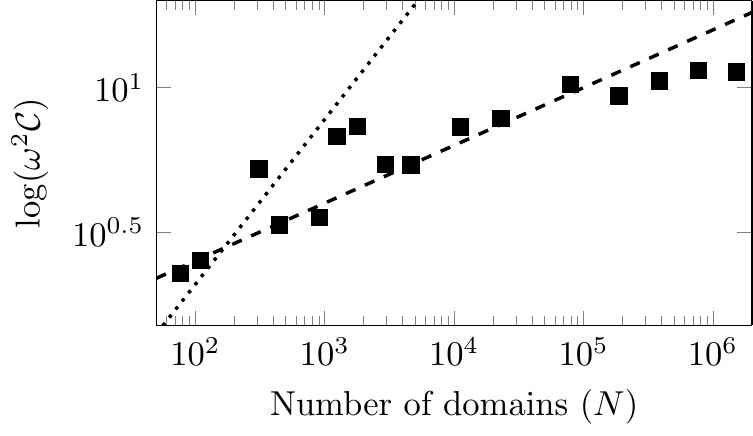}
            \label{fig:stability_depend_n_direct_5hz}} \hfill
 \subfigure[Stability estimates at $10$Hz frequency]
           {\includegraphics[]{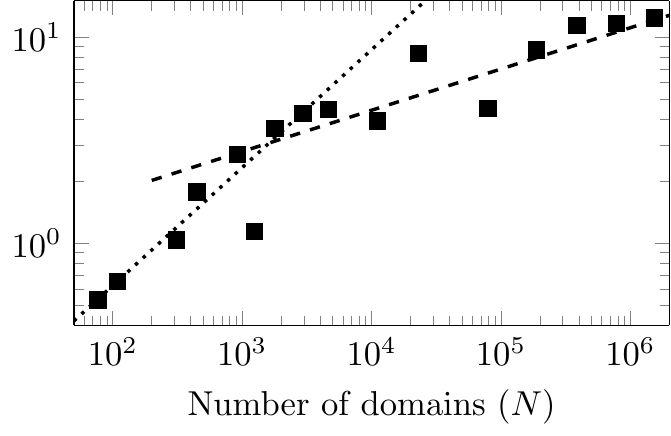}
            \label{fig:stability_depend_n_direct_10hz}}
 \caption{The black squares represent the computational 
          estimates of the stability constant 
          ($ \blacksquare $)         
          depending on the number of domains $N$ at selected frequency. 
          The dashed line ($\boldsymbol- \boldsymbol-$) 
          represents the analytical lower bound and the dotted
          line ($\boldsymbol\cdot \boldsymbol\cdot$) the
          upper bound, estimated with 
          equation~\cref{eq:numeric_cste}.}
 \label{fig:stability_depend_n_direct}
\end{figure}

Regarding the different coefficients in the analytical bounds, 
$K$ and $K_1$ remain undecided and are numerically approximated so that 
the bounds match the estimates at best. For instance the 
numerical value for $K_1$ is obtained from equation~(\cref{eq:SClb})
by computing the average value based on the numerical stability estimates
and $K$ is approximated following the same principle: 
\begin{equation} \label{eq:numeric_cste}
 K_1 = \dfrac{1}{n_{st}} 
\sum_{i=1}^{n_{st}}\dfrac{\log(4\omega^2\mathcal{C}_i)}{N_i^{1/5}},
\quad \quad \quad \quad
 K = \dfrac{1}{n_{st}} \sum_{i=1}^{n_{st}}
     \dfrac{\log(\omega^2\mathcal{C}_i)}{(1 + \omega^2B_2)N_i^{4/7}}.
\end{equation}
Here, $n_{st}$ is the number of numerical stability constant estimates 
and $\mathcal{C}_i$ the corresponding estimate for partitioning $N_i$.
We actually limit the computation of $K$ to use the first scales as it
grows too rapidly.
The numerical values obtained are given \cref{table:N_dependency_cst}. 
We also note that the term $\omega^2 B_2$ of the upper bound 
equation~\cref{eq:SCub} is relatively small in the geophysical 
context as we have here $B_2=5.10^{-7}$. \\

We can see that the stability constant increases with the number 
of subdomains, as expected. There are clearly two states in
the evolution of the estimates at the highest frequency 
(10Hz, \cref{fig:stability_depend_n_direct_10hz}). For a low number 
of partitions $N$ the numerical estimates match particularly well the upper bound 
while at finer scale it follows accurately the lower bound. This is illustrated
in \cref{fig:stability_depend_n_direct_10hz_groupplot} where we decompose
the two parts of the estimates between the low and high number of domains.

\begin{figure}[h!] \centering
   \includegraphics[]{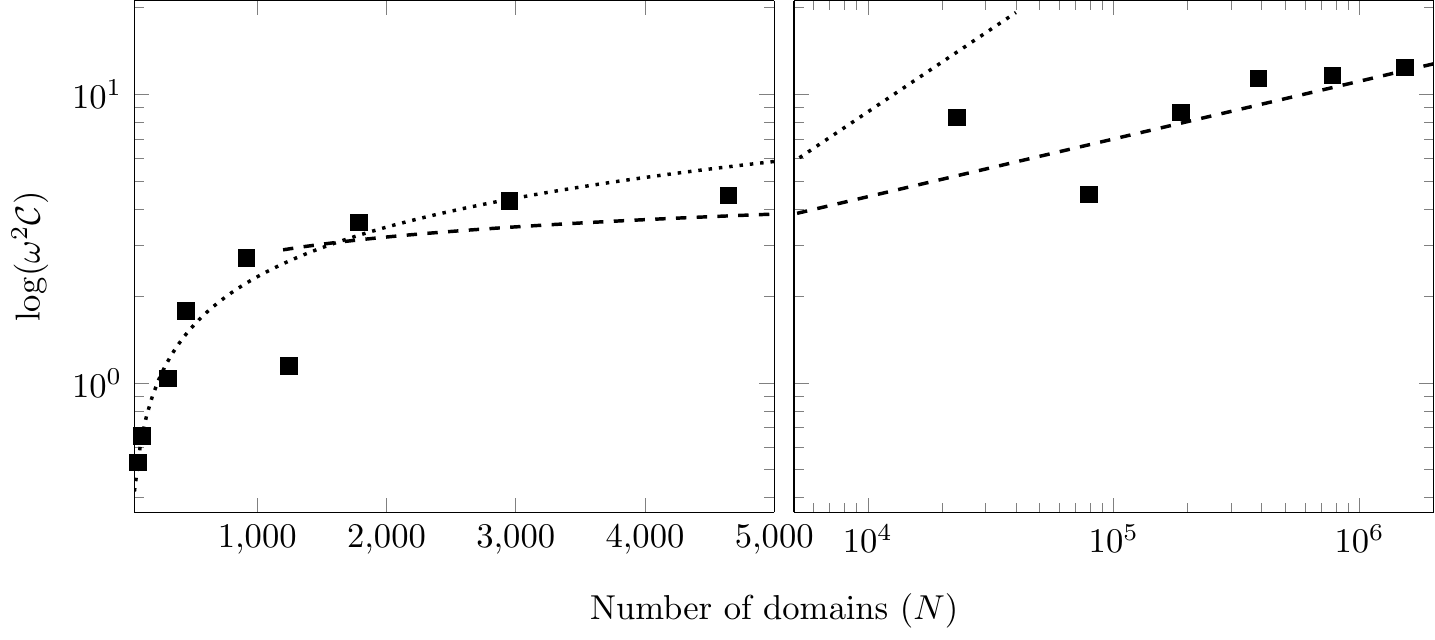}
   \caption{The black squares represent the computational 
            estimates of the stability constant 
            (\ref{plot:mark_estimates_N}) 
            depending on the number of domains $N$ at $10$Hz.
            The left part shows the coarsest scales which match 
            accurately the upper bound (dotted
            line, \ref{plot:upper_bound}).
            On the right the finer scale estimates are 
            accurately anticipated by the lower bound
            (dashed line, \ref{plot:lower_bound}).
            The constants $K$ and $K_1$ for the computation
            of the lower and upper bounds are numerically
            approximated with values given \cref{table:N_dependency_cst},
            following equation~\cref{eq:numeric_cste}.}
  \label{fig:stability_depend_n_direct_10hz_groupplot}
\end{figure}

\renewcommand{\arraystretch}{1.2}
\begin{table}[h!]
\caption{Numerical estimation of the constant in the 
         analytical bounds formulation for the numerical 
         estimates of the stability
         (\cref{fig:stability_depend_n_direct}),
         with $B_2=(1/1400)^2$).}
\label{table:N_dependency_cst}
\begin{center} 
\begin{tabular}{>{\centering\arraybackslash}p{.25\linewidth}|
                >{\centering\arraybackslash}p{.25\linewidth}|
                >{\centering\arraybackslash}p{.25\linewidth}|}
                & 5Hz & 10Hz  \\ \hline
$K_1$   & $1$    & $0.7$      \\ \hline
$K$     & $0.15$ & $0.05$     \\ \hline
\end{tabular} \end{center}
\end{table}

Alternatively for a lower frequency, i.e. $5$Hz on 
\cref{fig:stability_depend_n_direct_5hz}, the upper 
bound appears to increase too rapidly while the lower 
bound matches accurately the evolution of the 
stability constant estimates. Hence the upper bound 
we have obtained here is particularly appropriate for 
coarse scale and high frequency: when the variation of model
is much coarser compared to the wavelength.
\bigskip

\subsection{Seismic inverse problem using partial data} \label{subsec:partialdata}

In realistic geophysical experiments for the reconstruction 
of subsurface area (seismic tomography), it is more appropriate 
not to consider the full data but partial data only located on 
the upper surface. The data obtain from $c_1$ can be seen as field 
observation (sensor measurement of a seismic event at the surface).
The data using $c_2$ are simulation using an `initial guess'.
For the reconstruction, we mention the full waveform inversion 
method, where the recovery follows an iterative minimization of 
the difference between the measurements and simulations, to successively
update the initial guess (see \cite{Tarantola1984,Pratt1998}). 
There is also the difference in the boundary conditions where 
perfectly matched layers (PMLs) or absorbing boundary conditions 
are invoked instead of the Dirichlet boundary condition for the lateral
and bottom boundaries. However the 
top boundary is a free surface and remains a Dirichlet boundary 
condition. \\

For this test case we reproduce the same experiments but 
limiting the set of sources and the collected data to be
at the top boundary only.
We define a set of sources at the surface, 
separated by $160$m along the $x-$axis and 
$150$m along $y-$axis to generate a regular map of
$16 \times 10$ points. The receivers (data location) 
are positioned in the same fashion every $60$m along 
the $x-$axis and $45$m along $y-$axis and 
generate a regular map of $43 \times 32$ points, 
see \cref{fig:aq:lattice_src,fig:aq:lattice_rcv}. The partial boundary 
data computed are illustrated for a single centered 
boundary shot at $5$Hz frequency \cref{fig:aq:data}.\\

\begin{figure}[h!] \centering
   \subfigure[the crosses represent the boundary sources locations.]
              {\label{fig:aq:lattice_src}\includegraphics[]{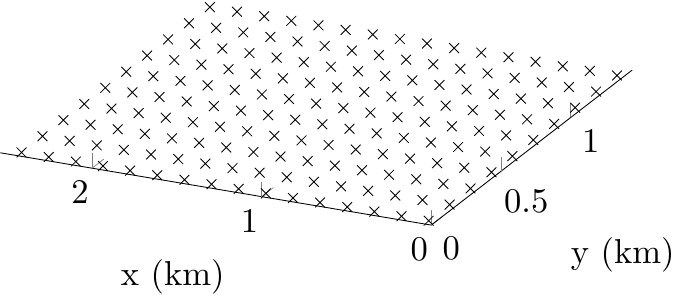}} \hfill
   \subfigure[The lattice represents 
              the discretization of the data,
              i.e. the receivers location.]
              {\label{fig:aq:lattice_rcv}\includegraphics[]{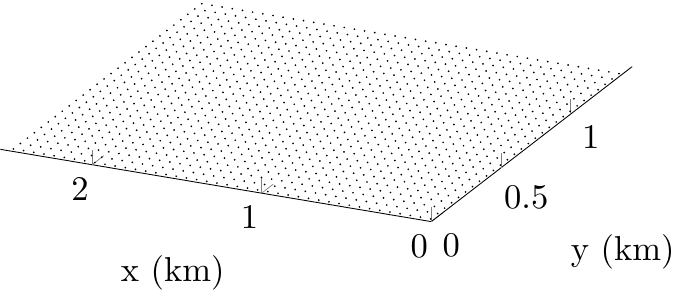}} \\
             
   \subfigure[Data recovered from a boundary centered shot, i.e.
              wavefield measured at the receivers location.]
             {\label{fig:aq:data}\includegraphics[]{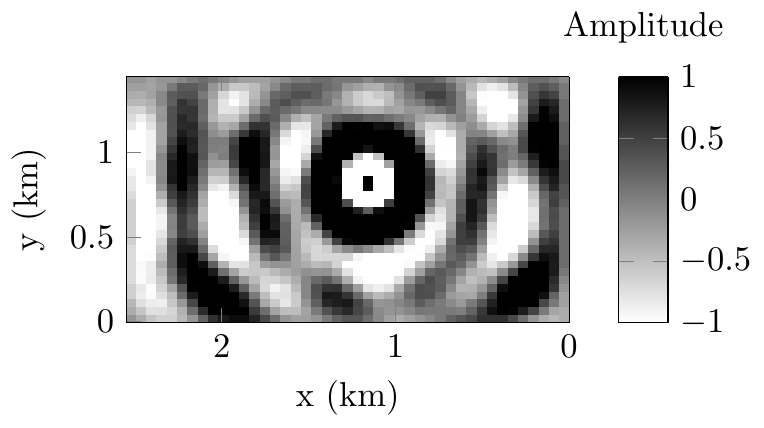}}
\caption{Illustration of the seismic acquisition set.}
\label{fig:acquistion-pml}
\end{figure}

In \cref{fig:stability_depend_n_direct_pml} we compare the 
stability constant estimates using partial data with the 
stability constant estimates obtained when considering the full 
Dirichlet-to-Neumann map as the data. We incorporate the analytical 
lower bound that was computed in the previous test case.\\

\begin{figure}[h!] \centering
 \subfigure[Stability estimates at $5$Hz frequency]
           {\includegraphics[]{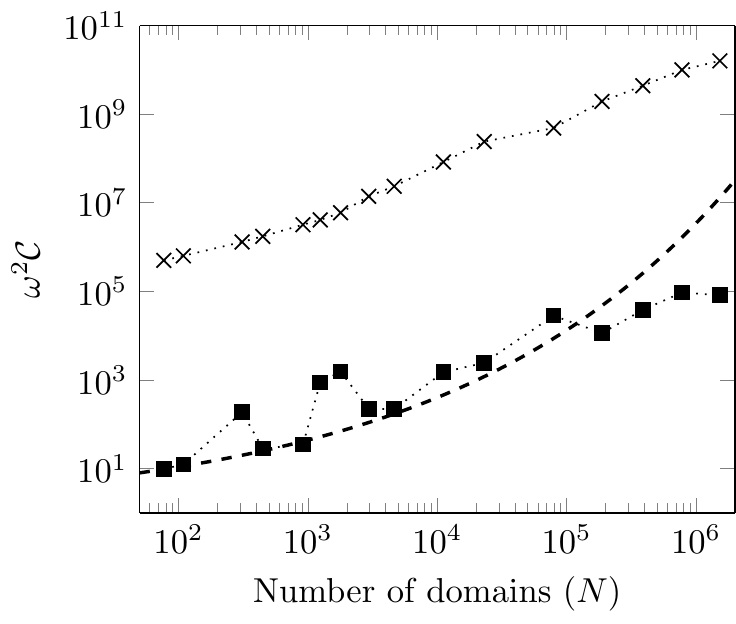}}\hfill
 \subfigure[Stability estimates at $10$Hz frequency]
           {\includegraphics[]{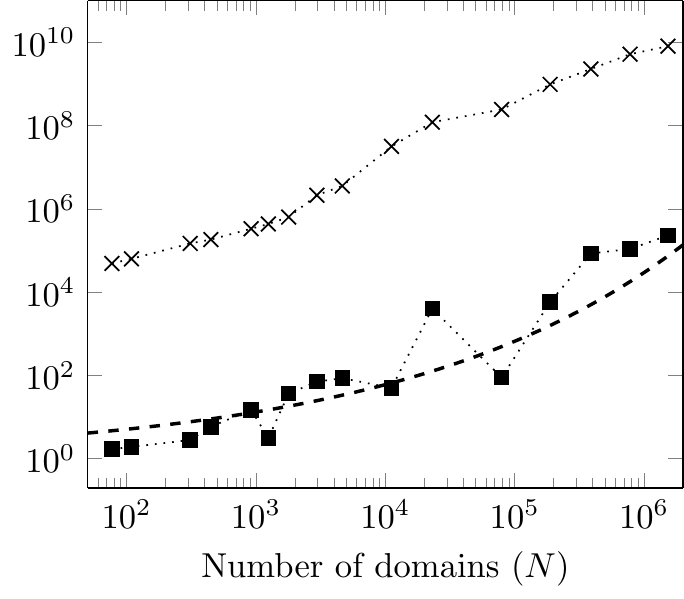}}
 \caption{Comparison of the computational stability estimates 
          using partial data only located on the top boundary
          ($\times$)
          and using the full boundary data
          ($ \blacksquare $).
          The dashed line ($\boldsymbol- \boldsymbol-$)
          represents the analytical lower bound as found 
          in \cref{fig:stability_depend_n_direct}.}
 \label{fig:stability_depend_n_direct_pml}
\end{figure}

The numerical estimates of the stability constants for the full and partial data 
in a $\log \log$-scale differ by a constant. This leads us to our conjecture that 
the $\log\log$ of the stability constants (as a function of $N$) of the full and 
partial data case in the continuous setting differ by a constant.


\bibliographystyle{siamplain}
\bibliography{bibliography}

\end{document}